\newcommand{\RR}{\mathbb{R}}
\newcommand{\dom}{{\mathrm{dom}}\,} 
\newcommand{\inte}{{\mathrm{int}}}
\newcommand{\cB}{{\mathcal{B}}}
\newcommand{\cX}{{\mathcal{X}}}
\newcommand{\cY}{{\mathcal{Y}}}
\newcommand{\cU}{{\mathcal{U}}}
\newtheorem{assumption}{Assumption}
\newtheorem{theorem}{Theorem}
\newtheorem{lemma}{Lemma}
\newtheorem{definition}{Definition}
\newtheorem{remark}{Remark}
\newtheorem{proposition}{Proposition}
\begin{document}

\title{Mirror frameworks for relatively Lipschitz and monotone-like variational inequalities}

\author{Hui Zhang\thanks{
Department of Mathematics, National University of Defense Technology,
Changsha, Hunan 410073, China.  Email: \texttt{h.zhang1984@163.com}
}
\and Yu-Hong Dai\thanks{100190 $\&$ School of Mathematical Sciences,
Chinese Academy of Sciences, Beijing 100049, China, Email: \texttt{dyh@lsec.cc.ac.cn}
}
}

\date{\today}

\maketitle

\begin{abstract}
Nonconvex-nonconcave saddle-point optimization in machine learning has triggered lots of research for studying non-monotone variational inequalities (VI). In this work, we introduce two mirror frameworks, called mirror extragradient method and mirror extrapolation method, for approximating solutions to relatively Lipschitz and monotone-like VIs. The former covers the well-known Nemirovski's mirror prox method and Nesterov's dual extrapolation method, and the recently proposed Bregman extragradient method; all of them can be reformulated into a scheme that is very similar to the original form of extragradient method. The latter includes the operator extrapolation method and the Bregman extrapolation method as its special cases. The proposed mirror frameworks allow us to present a unified and improved convergence analysis for all these existing methods under relative Lipschitzness and monotone-like conditions that may be the currently weakest assumptions guaranteeing (sub)linear convergence.
\end{abstract}

\textbf{Keywords.} Extragradient, extrapolation, Bregman distance, saddle-point, mirror descent, relative Lipschitzness, monotone, variational inequality

\textbf{AMS subject classifications.} 90C25,90C33,90C47


\section{Introduction}

Variational inequalities (VI), as a powerful mathematical tool to reformulate optimality conditions and saddle-point optimization \cite{2003Finite,2018a}, have recently attracted renewed attention in the communities of machine learning and optimization.  The most classic method for solving VIs may be the extragradient (EG) method, which was initially proposed by Korpelevich to modify the projected gradient method using extrapolation techniques so that the required strongly monotone assumption can be relaxed \cite{1976An}. From a mirror descent perspective, Nemirovski \cite{2004Prox} proposed a prox-method, which generalizes the EG method from the standard Euclidean case to non-Euclidean cases. Another non-Euclidean extension of EG is Nesterov's dual extrapolation (EP) proposed via dual averaging technique \cite{2007dual}. Recently, the authors of \cite{2019Unifying} unified the mirror descent and dual averaging and proposed a unified framework which covers Nemirovski's and Nesterov's methods. Besides these extensions and unifications, there are lots of research aimed at improving the original EG methods by reducing the cost of EG per iteration that needs to evaluate gradients twice; see e.g. \cite{1980A,1998A,2011The,2015projected,2020A}.

A common assumption behind all these methods above is that the operator in VIs is Lipschitz continuous and monotone. However, modern nonconvex-nonconcave saddle-point optimization problems such as those appeared in deep learning go beyond monotone VIs and hence the existing results fail to apply in non-monotone settings. This motivates a surge of interest to generalized VIs and its associated algorithms \cite{2015On,2017stochastic,JMLR:v22:20-533,2020efficient,2020simple,lee2021fast,grimmer2021landscape}. We restrict our attention to the line of research that relaxes the Lipschitz continuity and monotonicity assumptions.

The concept of relative Lipschitzness appeared about five years ago in the field of convex optimization \cite{Bauschke2016A,Lu2016relatively}. Following a basic observation on the classic proximal gradient, the authors of \cite{Bauschke2016A} proposed the Lipschitz-like/convexity condition to replace the usual Lipschitz continuity property. This property was also independently discovered by the authors of \cite{Lu2016relatively} under the name of relative smoothness. Recently, the notion of relative smoothness was extended to monotone VIs in \cite{2020Relative}. The extension is not trivial since the formulation in VIs is quite different from that in optimization. It has been illustrated that the relative Lipschitzness is a very suitable tool to convergence analysis for several extragradient-type methods applied to monotone VIs \cite{2020Relative,2021Extragradient}.

Regarding to the relaxation of monotonicity, generalized monotone VIs, whose operators are not necessarily monotone, were considered in the past few years; see e.g. the work \cite{2015On}. In the community of machine learning, the generalized monotone assumption \cite{2015On} was rediscovered in the work \cite{2017stochastic} under the name of variational coherence. In order to study weakly-convex-weakly-concave min-max problems for the training
of generative adversarial networks, the authors of \cite{JMLR:v22:20-533} proposed a weakly monotone condition for VIs. With the similar motivation, the authors of \cite{2020efficient} introduced a new class of structured nonconvex-nonconcave min-max optimization problems by proposing a weak  MVI (Minty variational inequality) condition, under which they proved sublinear convergence on the squared gradient norm for an EG-type method, named EG+ method. Another recent work \cite{2020simple} introduced a generalized strong monotonicity condition, with which the authors showed linear convergence for their proposed operator extrapolation method. It should be noted that all these mentioned works were done under the Lipschitz continuity assumption.

A natural question arises: can we combine the relative Lipschitzness and non-monotonicity to consider more general VIs? To answer this question, we first introduce two algorithmic frameworks--mirror EG method and mirror EP method by utilizing mirror mapping and extrapolation techniques as well as the tool of generalized Bregman distances. The new frameworks mainly depend on a mirror mapping function $\omega$. By suitably choosing subgradients from the subdifferential of $\omega$, we greatly expand our previously proposed Bregman EG and EP methods \cite{2021Extragradient}. When the mirror mapping function is specialized to the so-called $\cX$-regularizer, the mirror EG method recovers the unified mirror prox method in \cite{2019Unifying}. Formally, the mirror EG method is very similar to the original EG method, which helps us clearly distinguish Nemirovski's mirror prox method \cite{2004Prox}, Nesterov's dual extrapolation method \cite{2007dual}, and our Bregman EG method \cite{2021Extragradient}. The mirror EP method is formally similar to Bregman EP method and covers the operator EP method \cite{2020simple}. Theoretically, we derive a group of convergence results for our proposed frameworks under relative Lipschitzness and generalized monotone-like assumptions, which generalize and improve the existing theory. For example, Theorem \ref{thMEG2} for mirror EG method generalizes the convergence result of EG+ in \cite{2020efficient} and Theorem \ref{thMEP2} for mirror EP method generalizes and improves the convergence result of operator EP in \cite{2020simple}.

The remainder of the paper is organized as follows. In Section 2, we present some basic notations
and elementary preliminaries. In Section 3, the VI problem of interest and the saddle-point optimization problem are formulated, along with a group of Lipschitz-like and monotone-like assumptions. In Section 4, by introducing the mirror mapping and extrapolation techniques, we propose the mirror EG and EP methods. We also present an algorithmic application to a class of constrained saddle-point optimization problems. In Section 5, we study convergence properties  of the proposed frameworks under relative Lipschtizness and generalized monotone-type conditions. Section 6 gives some concluding remarks and several research directions for future work.

\section{Preliminaries}\label{se2}

\subsection{Notation}
In this paper, we restrict our attention to an arbitrary finite dimensional space $\RR^d$ associated with inner product $\langle \cdot, \cdot\rangle$. Denote the norm in $\RR^d$ by $\|\cdot\|$ and its dual norm by $\|\cdot\|_*$.
For a multi-variables function $f(x,y)$, we use $\nabla_xf$ (respectively, $\nabla_yf$) to denote the gradient of $f$ with respect to $x$ (respectively, $y$).
Given a subset $\Omega\subset\RR^d$, the characteristic function of $\Omega$ is denoted by $I_\Omega$ that takes value zero on $\Omega$ and equals to $+\infty$ elsewhere.

\subsection{Convex analysis tools}
We present some basic notations and facts about convex analysis, which will be used in our study.
\begin{definition}\label{sc0}
A function $\phi:\RR^d\rightarrow \RR\bigcup\{+\infty\}$ is convex if for any $\alpha\in [0,1]$ and $u, v\in \RR^d$, we have
\begin{equation*}
\phi(\alpha u+(1-\alpha)v)\leqslant \alpha \phi(u)+(1-\alpha)\phi(v);
\end{equation*}
and strongly convex with modulus $\mu> 0$ if for any $\alpha\in [0,1]$ and $u, v\in \RR^d$, we have
\begin{equation*}
\phi(\alpha u+(1-\alpha)v)\leqslant \alpha \phi(u)+(1-\alpha)\phi(v)-\frac{1}{2}\mu\alpha(1-\alpha)\|u-v\|^2. \label{SC1}
\end{equation*}
Furthermore, $\phi$ is concave if $-\phi$ is convex.

Let $f(x,y):\RR^{p+q}\rightarrow \RR\bigcup\{+\infty\}$ be a function. If $f(x,y)$ is convex in $x$ for any fixed $y$ and concave in $y$ for any fixed $x$, then we say it is convex-concave.
\end{definition}

\begin{definition}
Let $\phi:\RR^d\rightarrow \RR\bigcup \{+\infty\}$ be a convex function. The subdifferential of $\phi$  at $u\in \RR^d$ is defined as
$$\partial \phi (u):= \{ u^* \in \RR^d: \phi(v)\geqslant \phi(u)+ \langle u^*, v-u\rangle,\quad \forall v\in \RR^d \}.$$
The elements of $\partial \phi(u)$ are called the subgradients of $\phi$ at $u$. The effective domain of $\phi$ is defined as $\dom \phi :=\{x\in \RR^d: \phi(x)< +\infty\}$ and the effective domain of $\partial \phi$ is given by
$$\dom \partial\phi :=\{x\in \RR^d: \partial\phi(x)\neq \emptyset\}.$$
\end{definition}

The subdifferential generalizes the classical concept of differential because of the well-known fact that $\partial \phi(u)=\{\nabla \phi(u)\}$ when the function $\phi$ is differentiable.
In terms of the subdifferential, the strong convexity in Definition \ref{sc0} can be equivalently stated as \cite{Hiriart2004}: For any $u, v\in \RR^d$ and $v^*\in \partial\phi(v)$, we have
\begin{equation}\label{sc01}
 \phi(u)\geqslant \phi(v)+\langle v^*, u-v\rangle +\frac{\mu}{2}\|u-v\|^2.
\end{equation}

\begin{definition}
Let $\phi:\RR^d\rightarrow \RR\bigcup \{+\infty\}$ be a convex function. The conjugate of $\phi$ is defined as
$$\phi^*(u^*)=\sup_{v\in \RR^d}\{\langle u^*,v \rangle -\phi(v)\}.$$
\end{definition}

The following facts are well-known, which can be found from the classic textbooks \cite{1970convex} and \cite{Hiriart2004}.

\begin{lemma}\label{lemc}
Let $\phi:\RR^d\rightarrow \RR\bigcup \{+\infty\}$ be a lower-semicontinuous convex function with a nonempty domain. Then, the conditions $\phi(u)+\phi^*(u^*)=\langle u, u^*\rangle$, $u^*\in \partial \phi(u)$, and $u\in\partial \phi^*(u^*)$ are equivalent.
\end{lemma}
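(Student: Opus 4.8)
The plan is to prove the three conditions equivalent by first establishing the symmetric pair involving $\phi$ directly from definitions, and then obtaining the equivalence with $u\in\partial\phi^*(u^*)$ essentially for free by invoking the biconjugate identity. The starting point is the Fenchel--Young inequality, which holds for arbitrary $u,u^*\in\RR^d$: from the definition $\phi^*(u^*)=\sup_v\{\langle u^*,v\rangle-\phi(v)\}$ we immediately get $\phi^*(u^*)\geq \langle u^*,u\rangle-\phi(u)$, that is, $\phi(u)+\phi^*(u^*)\geq \langle u,u^*\rangle$. Thus the equality asserted in the first condition is precisely the statement that this universal inequality is attained.

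Next I would prove that $u^*\in\partial\phi(u)$ is equivalent to the Fenchel--Young equality. Expanding the subgradient inequality gives $\phi(v)\geq \phi(u)+\langle u^*,v-u\rangle$ for all $v$, which rearranges to $\langle u^*,v\rangle-\phi(v)\leq \langle u^*,u\rangle-\phi(u)$ for all $v$; taking the supremum over $v$ yields $\phi^*(u^*)\leq \langle u,u^*\rangle-\phi(u)$. Combined with Fenchel--Young, this forces the equality $\phi(u)+\phi^*(u^*)=\langle u,u^*\rangle$. Conversely, if this equality holds, then for every $v$ we have $\langle u^*,v\rangle-\phi(v)\leq \phi^*(u^*)=\langle u,u^*\rangle-\phi(u)$, which is exactly the subgradient inequality after rearrangement, so $u^*\in\partial\phi(u)$. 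This settles the equivalence of the first two conditions using nothing beyond the definitions.

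To bring in the third condition, I would apply the Fenchel--Moreau theorem: since $\phi$ is proper, lower-semicontinuous, and convex, the biconjugate satisfies $\phi^{**}=\phi$. The conjugate $\phi^*$ is itself lower-semicontinuous and convex, so the equivalence established in the previous paragraph applies verbatim to $\phi^*$ with the roles of the variables interchanged: $u\in\partial\phi^*(u^*)$ holds if and only if $\phi^*(u^*)+\phi^{**}(u)=\langle u^*,u\rangle$. Substituting $\phi^{**}=\phi$ turns the right-hand condition into $\phi(u)+\phi^*(u^*)=\langle u,u^*\rangle$, which is the first condition. This closes the chain and delivers the full equivalence.

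The main obstacle, and the only place where lower-semicontinuity is genuinely needed, is the biconjugate identity $\phi^{**}=\phi$; without it one obtains only $\phi^{**}\leq\phi$ and the equivalence with $u\in\partial\phi^*(u^*)$ can break down. Everything else reduces to elementary manipulation of the defining inequalities for the subdifferential and the conjugate, so I expect the write-up to be short once the Fenchel--Moreau theorem is cited.
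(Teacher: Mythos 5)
Your proof is correct and complete: the Fenchel--Young inequality together with its equality case gives the equivalence of the first two conditions by elementary rearrangement, and the Fenchel--Moreau biconjugation theorem $\phi^{**}=\phi$ (the one place lower semicontinuity is genuinely used, exactly as you note) transfers that equivalence to $\phi^*$ to capture the third condition. The paper itself gives no proof of this lemma---it is stated as a well-known fact with citations to Rockafellar and Hiriart-Urruty--Lemar\'echal---and your argument is precisely the standard one found in those references, so there is nothing further to reconcile.
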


\subsection{Bregman distance tools}
The Bregman distance, originally introduced in \cite{1967The}, is a very powerful tool in many fields where distances are involved. Recently, a couple of variants of Bregman distances were introduced, see e.g. \cite{1997Legendre,1997Free,2018Re}. For simplicity as well as generality, we define the Bregman distance via a convex function, not necessarily differentiable.
\begin{definition}
Let $\omega:\RR^d\rightarrow \RR\bigcup \{+\infty\}$ be a convex function with $\dom\partial\omega\neq \emptyset$. The Bregman distance $D_\omega^{v^*}(u,v)$ between $u\in \RR^d$ and $v\in\dom\partial\omega$ with respect to $\omega$ and a subgradient $v^*\in\partial \omega(v)$ is defined by
\begin{equation}\label{Breg}
D_\omega^{v^*}(u,v):=\omega(u)-\omega(v)-\langle v^*, u-v\rangle.
\end{equation}
If the distance generating function $\omega$ is differentiable at $v$, the Bregman distance above reduces to
\begin{equation}\label{Breg1}
D_\omega(u,v):=\omega(u)-\omega(v)-\langle \nabla\omega(v), u-v\rangle.
\end{equation}
\end{definition}
The following results on the Bregman distance, essentially coming from \cite{1997Free,1997proximal}, will be used in the later convergence analysis.
\begin{lemma}\label{lemBreg}
Let $\omega:\RR^d\rightarrow \RR$ be a convex function with $\dom\partial\omega\neq \emptyset$. For any $u, p, q\in \RR^d$ and $p^*\in \partial \omega (p), q^*\in \partial \omega (q)$, we have that
\begin{equation}\label{Bregdis1}
D_\omega^{p^*}(u,p) - D_\omega^{q^*}(u,q) + D_\omega^{q^*}(p,q)=\langle q^*-p^*, u-p\rangle.
\end{equation}
If the function $\omega$ is $\mu$-strongly convex, then we have
\begin{equation}\label{Bregdis3}
D_\omega^{q^*}(p,q)\geqslant \frac{\mu}{2}\|p-q\|^2.
\end{equation}
\end{lemma}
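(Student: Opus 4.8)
The plan is to handle the two assertions independently, since each follows directly from the definition \eqref{Breg} of the Bregman distance together with the subdifferential characterization of strong convexity recorded in \eqref{sc01}. Neither requires any machinery beyond unwinding definitions.

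For the three-point identity \eqref{Bregdis1}, I would expand all three Bregman distances according to \eqref{Breg}, namely $D_\omega^{p^*}(u,p)=\omega(u)-\omega(p)-\langle p^*, u-p\rangle$, $D_\omega^{q^*}(u,q)=\omega(u)-\omega(q)-\langle q^*, u-q\rangle$, and $D_\omega^{q^*}(p,q)=\omega(p)-\omega(q)-\langle q^*, p-q\rangle$, and then form the combination appearing on the left-hand side. Summing with the prescribed signs, the six $\omega$-values cancel in pairs, so that only inner-product terms survive. The one piece of bookkeeping is the regrouping $\langle q^*, u-q\rangle-\langle q^*, p-q\rangle=\langle q^*, u-p\rangle$; after this the remaining expression collapses to $\langle q^*-p^*, u-p\rangle$, which is precisely the right-hand side. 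This is a purely algebraic verification, and the only place demanding care is tracking the signs of the inner-product terms through the cancellation.

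For the lower bound \eqref{Bregdis3}, I would invoke \eqref{sc01} with the assignment $u\mapsto p$, $v\mapsto q$, and subgradient $v^*\mapsto q^*\in\partial\omega(q)$, which is legitimate because $q\in\dom\partial\omega$. This yields $\omega(p)\geq \omega(q)+\langle q^*, p-q\rangle+\tfrac{\mu}{2}\|p-q\|^2$. Transposing the first two terms on the right to the left side recovers exactly $\omega(p)-\omega(q)-\langle q^*,p-q\rangle = D_\omega^{q^*}(p,q)$ by \eqref{Breg}, so the desired estimate $D_\omega^{q^*}(p,q)\geq \tfrac{\mu}{2}\|p-q\|^2$ follows immediately.

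Thus there is no genuine obstacle: both parts are direct consequences of the definitions, with \eqref{Bregdis1} being an identity obtained by cancellation and \eqref{Bregdis3} a one-line application of strong convexity. The main thing to watch is notational consistency—ensuring that the single subgradient $q^*$ is used in both occurrences within \eqref{Bregdis1}, and that the strong-convexity inequality is applied to the correct ordered pair of points with a valid subgradient at the base point $q$.
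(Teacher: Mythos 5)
Your proposal is correct and complete: the three-point identity \eqref{Bregdis1} follows exactly by the cancellation you describe, and \eqref{Bregdis3} is indeed a one-line application of \eqref{sc01} at the base point $q$ with subgradient $q^*$. The paper itself states Lemma \ref{lemBreg} without proof, citing \cite{1997Free,1997proximal}, and your direct verification is precisely the standard argument behind those references, so there is nothing further to add.
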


\section{The problem formulation and assumptions}
In this section, we first introduce the VI problem of interest. Then, we discuss a group of assumptions on Lipschitz-like continuities and monotone-like conditions.

\subsection{The problem formulation}
Given a nonempty closed convex set $\cU\subset \RR^d$ and an operator $F:\cU\rightarrow \RR^d$, the VI problem of interest consists in finding $\hat{u}$ such that
\begin{equation}\label{VI}
  \langle F(\hat{u}), u-\hat{u}\rangle \geqslant 0, \forall u\in\cU.
\end{equation}
Such a point $\hat{u}$ is referred to as a strong solution of the VI problem \eqref{VI}. We denote the set of all such solutions by $\cU_s$, assumed to be nonempty. If the operator $F$ is monotone in the sense that
\begin{equation}\label{mono}
  \langle F(u)-F(u^\prime), u-u^\prime\rangle \geqslant 0, \forall u, u^\prime\in\cU,
\end{equation}
then the VI problem \eqref{VI} has the following equivalent problem that consists in finding
$\tilde{u}$ such that
\begin{equation}\label{VIw}
  \langle F(u), \tilde{u}-u\rangle \leqslant 0, \forall u\in\cU.
\end{equation}
Such a point $\tilde{u}$ is referred to as a weak solution of the VI problem \eqref{VI}. We denote the set of all such solutions by $\cU_w$, which equals to $\cU_s$ under the monotonicity of $F$.

As an illustrative example, we consider the following constrained saddle-point optimization problem, which consists in finding a point pair $(\hat{x},\hat{y})\in \cX\times \cY$ such that
\begin{equation}\label{sadd}
f(\hat{x},\hat{y})=\min_{x\in \cX}\max_{y\in \cY} f(x,y),
\end{equation}
where $\cX\subset\RR^p$ and $\cY\subset\RR^q$ are closed convex sets, and the saddle-point function $f:\cX\times\cY\rightarrow \RR$ is smooth convex-concave.
Such a point $(\hat{x},\hat{y})\in \cX\times \cY$ is referred to as a saddle point (also called a Nash equilibrium), which can be characterized by
$$f(\hat{x}, y)\leqslant f(\hat{x},\hat{y})\leqslant f(x,\hat{y}), \forall x\in \cX, y\in \cY.$$
Then, the constrained saddle-point problem can be equivalently reformulated into the VI problem \eqref{VI} with  $\cU=\cX\times \cY$, $u=(x,y), \hat{u}=(\hat{x},\hat{y})$, and the operator $F: \cU\rightarrow \RR^{p+q}$ being given by
\begin{equation}\label{sg}
F(u)=F(x,y):= (\nabla_xf(x, y),-\nabla_yf(x, y))^T.
\end{equation}
The operator above is called saddle-point gradient operator.

\subsection{Assumptions}
The most common assumption for studying VIs in the literature may be the $L$-Lipschitz continuity condition of the operator $F$, i.e., for some $L>0$,
\begin{equation}\label{Lip}
  \| F(u)-F(u^\prime)\|_*\leqslant L\|u-u^\prime\|, ~~\forall u, u^\prime\in\cU.
\end{equation}
If the operator $F$ is specialized to the gradient operator of a function $\phi$, then the property \eqref{Lip} recovers the gradient-Lipschitz-continuity condition (also called $L$-smoothness, see e.g. the book \cite{beck2017first}), which was recently generalized to so-called relative smoothness for wider first-order optimization in \cite{Bauschke2016A,Lu2016relatively}.
Motivated by the relative smoothness and the area convexity condition of \cite{2017area}, a weaker alternative to the $L$-Lipschitz continuity condition, called relative Lipschitzness, was introduced recently in \cite{2020Relative} with the form:
$$\langle F(v)-F(u),v-z\rangle \leqslant \lambda\cdot (D_\omega(v,u)+ D_\omega(z,v)),$$
where $\omega$ is a convex and differentiable function and $\lambda>0$ is the relatively Lipschitz constant.
The proposed relative Lipschitzness is well-suited for the standard analyses of extragradient-type methods, as illustrated in \cite{2020Relative} for mirror prox and dual extrapolation methods and in \cite{2021Extragradient} for Bregman extragradient and extrapolation methods. In this study, in order to apply the relative Lipschitzness more widely, we slightly modify it by allowing the distance generating function $\omega$ to be nondifferentiable.
\begin{assumption}\label{reLip}
Let $\omega:\RR^d\rightarrow \RR\bigcup\{+\infty\}$ be a convex function satisfying $\cU\subset \dom\partial\omega$ and let $\lambda$ be a positive parameter.  We say that the operator $F$ is $\lambda$-relatively Lipschitz continuous with respect to $\omega$ on $\cU$ if for any $u, v, z\in \cU$ it holds that
\begin{equation}\label{relip1}
\langle F(v)-F(u),v-z\rangle \leqslant \lambda\cdot (\inf_{u^*\in \partial \omega(u)}D_\omega^{u^*}(v,u)+ \inf_{v^*\in \partial \omega(v)}D_\omega^{v^*}(z,v)).
\end{equation}
\end{assumption}

The following two results, essentially observed in \cite{2020Relative}, respectively say that the relative Lipschitzness encapsulates the basic Lipschitz continuity and relative smoothness. Note that the relative notions in the second lemma are more general than the original statement in \cite{2020Relative} since we allow the distance generating function $\omega$ to be nondifferentiable. For completeness, we give proofs in Appendix.
\begin{lemma}\label{resclip}
If $\omega$ is strongly convex with modulus $\mu$ and $F$ is L-Lipschitz continuous, then $F$ is $L/\mu$-relatively Lipschitz continuous with respect to $\omega$ on the whole space $\RR^d$.
\end{lemma}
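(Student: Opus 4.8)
The plan is to sandwich the left-hand side of \eqref{relip1} (with $\lambda=L/\mu$) between a chain of elementary estimates, converting the bilinear form into squared norms and then the squared norms into the Bregman quantities on the right. Since the claim is stated on all of $\RR^d$, I would simply take $u,v,z\in\RR^d$ arbitrary and verify the inequality pointwise.

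First I would bound the bilinear form by a product of primal and dual norms. By the definition of the dual norm,
$$\langle F(v)-F(u),v-z\rangle \leq \|F(v)-F(u)\|_*\,\|v-z\|,$$
and invoking the $L$-Lipschitz continuity \eqref{Lip} to replace $\|F(v)-F(u)\|_*$ by $L\|v-u\|$ gives the upper bound $L\|v-u\|\,\|v-z\|$. I would then symmetrize via the arithmetic--geometric mean inequality $ab\le\tfrac12(a^2+b^2)$, yielding
$$\langle F(v)-F(u),v-z\rangle \leq \frac{L}{2}\left(\|v-u\|^2+\|v-z\|^2\right).$$

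The last step is to convert the squared norms into the Bregman terms, and this is the only place where the hypotheses on $\omega$ are used. By the subgradient form of strong convexity \eqref{sc01}, for every $u^*\in\partial\omega(u)$ one has $D_\omega^{u^*}(v,u)\ge\tfrac{\mu}{2}\|v-u\|^2$, and likewise $D_\omega^{v^*}(z,v)\ge\tfrac{\mu}{2}\|z-v\|^2$ for every $v^*\in\partial\omega(v)$. Since these bounds are uniform over the choice of subgradient, they descend to the infima appearing in \eqref{relip1}, so that $\tfrac{L}{\mu}$ times the sum of the two infima dominates $\tfrac{L}{2}(\|v-u\|^2+\|v-z\|^2)$; combining with the previous display closes the chain and establishes Assumption \ref{reLip} with constant $L/\mu$. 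I do not expect a genuine obstacle here, as the argument is a short composition of standard estimates; the one point meriting care is precisely the passage through the infimum over subgradients, which is where the nondifferentiable generalization of $\omega$ must be accommodated, and it is handled by observing that the strong-convexity lower bound holds simultaneously for all subgradients.
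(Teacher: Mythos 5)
Your proof is correct. The paper states Lemma \ref{resclip} without giving a proof, attributing the observation to \cite{2020Relative}, and your argument---generalized Cauchy--Schwarz, the $L$-Lipschitz bound \eqref{Lip}, the arithmetic--geometric mean inequality, and the strong-convexity lower bound $D_\omega^{u^*}(v,u)\geq\tfrac{\mu}{2}\|v-u\|^2$ from \eqref{sc01} (equivalently \eqref{Bregdis3}), which holds uniformly over all subgradients and hence passes to the infima in \eqref{relip1}---is exactly the standard argument behind that observation.
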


\begin{lemma}\label{relsmooth}
If $\phi$  is convex and $L$-relatively smooth with respect to $\omega$ in the senses that
\begin{equation}\label{relsmo}
 \phi(u)-\phi(v)-\langle \nabla \phi(v), u-v\rangle \leqslant L\cdot D_\omega^{v^*}(u,v), \forall u, v\in\RR^d,~v^*\in\partial \omega(v),
\end{equation}
then $F$, defined by $F:=\nabla \phi$, is $L$-relatively Lipschitz continuous with respect to $\omega$ on the whole space $\RR^d$.
\end{lemma}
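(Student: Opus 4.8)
The plan is to establish the bound \eqref{relip1} directly with $\lambda=L$, using a three-point identity for the Bregman distance generated by $\phi$ itself, together with the one-sided estimate \eqref{relsmo}. Throughout I write $D_\phi(a,b):=\phi(a)-\phi(b)-\langle\nabla\phi(b),a-b\rangle$ for the Bregman distance generated by $\phi$; this is well defined because the hypothesis $F=\nabla\phi$ presupposes that $\phi$ is differentiable, so $\nabla\phi(a)$ and $\nabla\phi(b)$ make sense for all points.

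First I would record the elementary three-point identity
\begin{equation*}
\langle \nabla\phi(v)-\nabla\phi(u),\,v-z\rangle = D_\phi(z,v)+D_\phi(v,u)-D_\phi(z,u),\qquad \forall u,v,z\in\RR^d,
\end{equation*}
which is verified by direct expansion exactly as in the proof of \eqref{Bregdis1}: the three occurrences of each function value cancel, and the surviving linear terms collapse to $\langle\nabla\phi(v)-\nabla\phi(u),v-z\rangle$. This step is purely algebraic and needs no regularity beyond differentiability of $\phi$.

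Next I would bound the first two terms on the right-hand side by relative smoothness. Applying \eqref{relsmo} with base point $v$ to the pair $(z,v)$ and with base point $u$ to the pair $(v,u)$ gives
\begin{equation*}
D_\phi(z,v)\le L\,D_\omega^{v^*}(z,v),\qquad D_\phi(v,u)\le L\,D_\omega^{u^*}(v,u).
\end{equation*}
Since \eqref{relsmo} is assumed for \emph{every} subgradient, these hold for all $v^*\in\partial\omega(v)$ and all $u^*\in\partial\omega(u)$; hence, after discarding $-D_\phi(z,u)$, I may pass to the infimum over these subgradients on the right and recover precisely the infimum form appearing in Assumption \ref{reLip}, namely \eqref{relip1} with $\lambda=L$. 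This is how the possible nondifferentiability of $\omega$ is absorbed without any extra work.

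The single delicate point, and the main obstacle, is the discarding of the term $-D_\phi(z,u)$: this is legitimate only when $D_\phi(z,u)\ge 0$, i.e. when $\phi$ is convex. For a merely relatively smooth but, say, concave $\phi$ the identity would leave the extra, possibly positive, quantity $-D_\phi(z,u)$ and the estimate would fail. I would therefore invoke convexity of $\phi$, which is built into the standard formulation of relative smoothness and is the natural setting here, since $F=\nabla\phi$ serves as the operator of a VI and monotonicity of $\nabla\phi$ is equivalent to convexity of $\phi$. Under that convexity $D_\phi(z,u)\ge 0$, and the chain of inequalities closes, yielding the claimed $L$-relative Lipschitzness on all of $\RR^d$.
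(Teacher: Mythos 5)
Your proof is correct, and it is essentially the argument the paper relies on: the paper gives no proof of this lemma, deferring to the cited reference \cite{2020Relative}, and the standard proof there is exactly your route --- the three-point identity $\langle \nabla\phi(v)-\nabla\phi(u),v-z\rangle = D_\phi(z,v)+D_\phi(v,u)-D_\phi(z,u)$, bounding the first two terms via \eqref{relsmo} (for every subgradient, so the infima in \eqref{relip1} come for free), and dropping $-D_\phi(z,u)$ by convexity. Your ``delicate point'' is also genuinely a point: as stated in the paper, the lemma omits convexity of $\phi$, and without it the conclusion is false. Indeed, take $\phi=-\tfrac12\|\cdot\|^2$ and $\omega=\tfrac12\|\cdot\|^2$; then $D_\phi(u,v)=-\tfrac12\|u-v\|^2\le 0$, so \eqref{relsmo} holds for every $L>0$, yet with $u-v=v-z=w\neq 0$ one gets $\langle \nabla\phi(v)-\nabla\phi(u),v-z\rangle=\|w\|^2$ while $\lambda\,(D_\omega(v,u)+D_\omega(z,v))=\lambda\|w\|^2$, forcing $\lambda\ge 1$ regardless of how small $L$ is. So convexity of $\phi$ (as in the standard formulation of relative smoothness in \cite{Bauschke2016A,Lu2016relatively,2020Relative}) is a necessary hypothesis, and your explicit invocation of it patches an imprecision in the statement rather than creating a gap in your proof.
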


The (relatively) Lipschitz continuity of $F$ defines (relatively) smooth VIs. In order to solve the VIs globally and more efficiently, monotone-like assumptions are usually necessary. Therefore, we introduce two monotone-like conditions, namely $\eta$-monotone and $\tau$-comonotone, as strengthen versions of \eqref{mono}.
The operator $F$ is $\eta$-monotone if for any $u, u^\prime \in\cU$ it holds that
\begin{equation}\label{paramono}
\langle F(u)-F(u^\prime),u- u^\prime \rangle\geqslant \eta\cdot \|u-u^\prime\|^2.
\end{equation}
The operator $F$ is $\tau$-comonotone if for any $u, u^\prime \in\cU$ it holds that
\begin{equation}\label{cmono}
\langle F(u)-F(u^\prime),u- u^\prime \rangle\geqslant \tau\cdot \|F(u)-F(u^\prime)\|_*^2.
\end{equation}
The $\eta$-monotonicity consists of three cases depending on the parameter: strong monotonicity when $\eta>0$, weak monotonicity when $\eta<0$, and monotonicity when $\eta=0$. Similarly, the $\tau$-comonotonicity also consists of three cases: cocoercivity when $\tau>0$, comonotonicity when $\tau<0$, and monotonicity when $\eta=0$; note that the definition of comonotonicity here is more general than that in \cite{bauschke2020generalized} since we distinguish the norm and dual norm.  There are interesting links between monotonicity of operator and convexity of function. A classic result is the correspondence between the convexity of a function $\phi$ and the monotonicity of its gradient $\nabla \phi$. Another well-known result is that the saddle-point gradient operator \eqref{sg} is monotone if the saddle-point function $f(x,y)$ is smooth convex-concave and vice versa. Based on these facts, we may guess that some classes of non-convex and non-concave saddle-point functions could be characterized by certain non-monotonicity conditions. Actually, the connection between the weak monotonicity of the saddle-point gradient operator \eqref{sg} and the weakly-convex-weakly-concave of the saddle-point function $f$ was recently established in \cite{JMLR:v22:20-533}. Thereby, further relaxed monotonicity conditions in VIs allow us to cover saddle-point optimization problems more widely.

Note that the monotone-like assumptions previously mentioned are all characterized by inequalities that require all point pairs $(u,u^\prime)$ to satisfy. This requirement is strict but not necessary because we actually only need these inequalities to hold on some \textsl{certain} point pairs. Hence, it leaves us space for further relaxation; this idea was developed in convex optimization for relaxing the strong convexity in \cite{2013Gradient,necoara2019linear}. Returning to VIs and with the same spirit of relaxation, the authors of \cite{2015On} introduced the generalized monotone assumption, which was rediscovered in the work \cite{2017stochastic} under the name of variational coherence.
In symbol, we say that the operator $F$ is generalized monotone if for any $\hat{u}\in\cU_s$ it holds that
\begin{equation}\label{gmono}
\langle F(u),u-\hat{u}\rangle \geqslant 0, \forall u\in \cU.
\end{equation}
Obviously, this condition is implied by the monotonicity and also by the pseudo-monotone condition, which says
that the operator $F$ is pseudo-monotone if for any $u, u^\prime \in\cU$ it holds that
\begin{equation}\label{pmono}
\langle F(u), u^\prime -u\rangle\geqslant 0\Rightarrow \langle F(u^\prime), u^\prime-u\rangle \geqslant 0.
\end{equation}
Conversely, the generalized monotonicity does not imply the monotonicity or the pseudo-monotonicity, and hence it is a strictly weaker assumption; see the constructed example in \cite{2015On}.
Recently, the authors of \cite{2020efficient} introduced the weak MVI condition, which is further weaker than the generalized monotonicity, for a class of structured nonconvex-nonconcave min-max optimization.
It says that the operator $F$ is $\rho$-weakly MVI monotone if $\rho>0$ and there exists $\hat{u}\in \cU_s$ such that for any $u \in\cU$ it holds that
\begin{equation}\label{wmono}
\langle F(u), u-\hat{u}\rangle\geqslant-\frac{\rho}{2}\|F(u)\|_*^2.
\end{equation}
It has been shown in \cite{2020efficient} that the $\rho$-weakly MVI monotone assumption is implied by the $(-\frac{\rho}{2})$-comonotonicity \cite{bauschke2020generalized} in the case of $\cU=\RR^d$. Based on the interaction dominate condition in \cite{grimmer2021landscape}, the authors of \cite{lee2021fast} introduced a nonconvex-nonconcave condition that implies the the $-\frac{\rho}{2}$-comonotonicity and hence implies the $\rho$-weakly MVI monotonicity.

In order to study linear convergence of the operator extrapolation method, the authors of \cite{2020simple} proposed a generalized strong monotonicity condition, which is stronger than the generalized monotonicity but does not imply monotonicity. Letting $\hat{u}\in \cU_s$, it reads as
\begin{equation}\label{gsmono}
\langle F(u), u-\hat{u} \rangle \geqslant 2\mu\cdot D_\omega(\hat{u},u), ~~\forall u \in\cU.
\end{equation}
When $F=\nabla \phi$ and $\omega=\frac{1}{2}\|\cdot\|^2$, the above inequality reduces to
\begin{equation}\label{rsc}
\langle \nabla\phi(u), u-\hat{u}\rangle \geqslant \mu\cdot\|\hat{u}-u\|^2, ~~\forall u \in\cU,
\end{equation}
which was introduced and studied in \cite{2013Aug,2013Gradient,zhang2017the} under the name of restricted strongly convex property and in \cite{2017stochastic} under the name of strongly variational coherence.
Below, we present a modified version of \eqref{gsmono} by allowing the distance generating function $\omega$ to be nondifferentiable, and call it relatively restricted monotonicity.
\begin{assumption}\label{reMono}
Let $\omega:\RR^d\rightarrow \RR\bigcup\{+\infty\}$ be a convex function with $\cU\subset \dom\partial\omega$ and let $\mu\geqslant 0$ be a given parameter.  We say that the operator $F$ is $\mu$-relatively restricted monotone with respect to $\omega$ on $\cU$ if for any $u\in\cU$ and $\hat{u}\in \cU_s$ it holds that
\begin{equation}\label{reMono1}
 \langle F(u), u-\hat{u} \rangle \geqslant \sup_{u^*\in \partial\omega(u)}2\mu\cdot D_\omega^{u^*}(\hat{u},u).
\end{equation}
\end{assumption}

\section{Algorithmic frameworks}\label{se3}
In this section, we introduce two mirror algorithmic frameworks, both of which are constructed by coupling the operator $F:\cU\rightarrow \RR^d$ and a convex function  $\omega:\RR^d\rightarrow \RR\bigcup\{+\infty\}$.

\subsection{Two pillars}
In this part, we introduce the mirror mapping and extrapolation techniques, as two basic tools for constructing our mirror frameworks.
\subsubsection{Mirror mapping technique}
The mirror descent (MD) algorithm, originally introduced by Nemirovski and Yudin in \cite{1983problem}, has become a very popular method in many applied mathematical fields. Here, we briefly recall its algorithmic motivation and the explanation in terms of nonlinear projection by following \cite{2003Mirror} and \cite{2014Convex}.

Suppose that we want to minimize a function $f$ in some Banach space $\cB$, whose dual space $\cB^*$ is assumed to be different from $\cB$. Then, the gradient descent strategy can not be directly applied to minimizing $f$ since now the formulation
$$x-\eta\cdot \nabla f(x)$$
makes nonsense if the step size $\eta\neq 0$ due to the fact that $x\in \cB$ whilst $\nabla f(x)\in \cB^*$. There are two possible ways to remedy this: mapping $x$ into $\cB^*$ or mapping $\nabla f(x)$ back to $\cB$ so that both of them lie in the same space. More concretely, we first choose a map $\Psi: \cB\rightarrow \cB^*$ and its inverse map $\Psi^{-1}: \cB^*\rightarrow \cB$. The first way goes like
$$\Psi^{-1}(\Psi(x)-\eta\cdot \nabla f(x)).$$
The second way works as
$$x-\eta\cdot \Psi^{-1}(\nabla f(x)).$$
The MD algorithm follows from the first way by setting $\Psi=\nabla \omega$ and $\Psi^{-1}=\nabla \omega^*$ with a differentiable function $\omega$; while the dual space preconditional gradient descent, recently introduced in \cite{2021Dual}, follows from the second way.

Now, we describe the basic step of the MD as follows; see also Figure \ref{fig:lazy-mirror-descent}.
\begin{equation}\label{md}
  x^+=\nabla \omega^*(\nabla \omega(x)-\eta \cdot\nabla f(x)).
\end{equation}
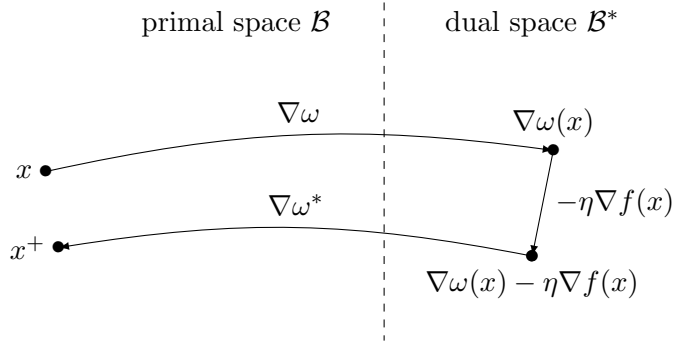
\begin{figure}
  \centering
\begin{tikzpicture}[y=0.80pt, x=0.80pt, yscale=-1.000000, xscale=1.000000, inner sep=0pt, outer sep=0pt,scale=2]
\draw[->,>=latex] (-5,-10) node {$\bullet$} node[left=5pt] {$x$} to[bend right=10] node[midway,above=5pt]
{$\nabla \omega$} (115,-15);
\draw[->,>=latex] (115,-15) node {$\bullet$} node[above=5pt]{$\nabla \omega(x)$}
-- (110,10) node {$\bullet$}node {$\bullet$} node[below=5pt]{$\nabla \omega(x)-\eta\nabla f(x)$} node[midway,right=5pt]{$-\eta\nabla f(x)$};
\draw[->,>=latex] (110,10) to[bend left=10] node[midway,above=5pt]
{$\nabla \omega^*$}  (-2,8) node {$\bullet$} node[left=5pt] {$x^+$};

\draw[dashed] (75,-50) -- (75,30);
\draw (40,-45) node {primal space $\cB$};
\draw (110,-45) node {dual space $\cB^*$};
\end{tikzpicture}

  \caption{Mirror Descent}
  \label{fig:lazy-mirror-descent}
\end{figure}
Note that if $\omega$ is strongly convex, then \eqref{md} is just the first-order optimality condition of the following convex optimization:
\begin{eqnarray}\label{nonp}
\begin{array}{lll}
x^+ &= & \arg\min_z\{ \langle \eta \nabla f(x), z\rangle +\omega(z)- \langle \nabla \omega(x), z\rangle \} \\
  &= & \arg\min_z\{ \langle \eta \nabla f(x), z\rangle +D_\omega(z,x) \}.
\end{array}
\end{eqnarray}
This is exactly the nonlinear projection explanation introduced in \cite{2003Mirror}. If let $\omega=\frac{1}{2}\|\cdot\|_2^2$ with $\|\cdot\|_2$ being the Euclidean norm, then we have
\begin{equation}\label{gd}
  x^+=x-\eta\nabla f(x)=\arg\min_z\{ \langle \eta \nabla f(x), z\rangle +\frac{1}{2}\|z-x\|_2^2 \}.
\end{equation}
Therefore, \eqref{nonp} can be viewed as a generalized gradient descent (also called Bregman gradient descent) since it replaces the Euclidean distance proximity term $\frac{1}{2}\|z-x\|_2^2$ with the Bregman distance $D_\omega(z,x)$.
In order to further generalize \eqref{nonp}, we consider to replace the term $D_\omega(z,x)$ by the generalized Bregman distance $D_\omega^{x^*}(z,x)$ with $x^*\in \partial \omega(x)$; that is
\begin{eqnarray}\label{gmd}
\begin{array}{lll}
x^+ &= & \arg\min_z\{ \langle \eta \nabla f(x), z\rangle +D^{x^*}_\omega(z,x) \} \\
  &= & \nabla \omega^*(x^*-\eta \nabla f(x)).
\end{array}
\end{eqnarray}
We call the update \eqref{gmd} generalized mirror descent method; see also Figure \ref{fig:g-mirror-descent}. Note that this update not only depends on the function $\omega$ but also the subgradient $x^*$. Different setting rules of such $x^*$ will correspond to different algorithmic schemes.

\subsubsection{Extrapolation technique}
The extragradient (EG) method, originally introduced by Korpelevich in 1976 \cite{1976An}, is also a modified gradient method using the idea of extrapolation rather than the mirror mapping. It first makes a trial step along a negative gradient direction to produce an ``extrapolated" point, and then proceeds actual movement along the gradient at the extrapolated point. Applying this idea to the VI problem \eqref{VI} with $\cU=\RR^d$, the basic step of the EG method can be described as follows:
\begin{eqnarray}\label{EG}
\left\{\begin{array}{lll}
\bar{u}_k =  u_k- \eta F(u_k),   \\
u_{k+1} = u_k-\eta F(\bar{u}_k).
\end{array} \right.
\end{eqnarray}
Here, $\bar{u}_k$ is the extrapolated point and $u_{k+1}$ is the actually updated point from $u_k$. In \eqref{EG}, two gradient-like sequences $\{F(u_k)\}$ and $\{F(\bar{u}_k)\}$ need to compute. Recently, a single-call EG variant, which only requires to compute the sequence $\{F(\bar{u}_k)\}$, received lots of attention \cite{2019onthe,2021linear,2021fast,2021the}. This variant reads as
\begin{eqnarray}\label{EGv1}
\left\{\begin{array}{lll}
\bar{u}_k =  u_k- \eta F(\bar{u}_{k-1}),   \\
u_{k+1} = u_k-\eta F(\bar{u}_k).
\end{array} \right.
\end{eqnarray}
One can see that the new extrapolated point $\bar{u}_k$ is obtained along the history gradient $F(\bar{u}_{k-1})$ rather than $F(u_k)$. This slight modification provides us with the following equivalent form of \eqref{EGv1}, that is
\begin{equation}\label{EGv}
\bar{u}_{k+1}=\bar{u}_k-\eta F(\bar{u}_k)-\eta(F(\bar{u}_k)-F(\bar{u}_{k-1})).
\end{equation}
The term $F(\bar{u}_k)-F(\bar{u}_{k-1})$ is referred to as an operator extrapolated term. At last, it should be noted that \eqref{EGv} can also be derived from Popov's modification \cite{1980A}, which reads as
\begin{eqnarray}\label{EGv2}
\left\{\begin{array}{lll}
\bar{u}_k =  \bar{u}_{k-1}- \eta F(u_k),   \\
u_{k+1} = \bar{u}_k-\eta F(u_k).
\end{array} \right.
\end{eqnarray}
Eliminating the element $\bar{u}_k$, we can get
\begin{equation}\label{EGve}
u_{k+1}=u_k-\eta F(u_k)-\eta(F(u_k)-F(u_{k-1})),
\end{equation}
which is essentially \eqref{EGv}. Note that Popov's method is also a single-call EG-type variant.

\begin{figure}
  \centering
\begin{tikzpicture}[y=0.80pt, x=0.80pt, yscale=-1.000000, xscale=1.000000, inner sep=0pt, outer sep=0pt,scale=2]
\draw[->,>=latex] (-5,-10) node {$\bullet$} node[left=5pt] {$x$} to[bend right=10] node[midway,above=5pt]
{$\partial \omega$} (115,-15);
\draw[->,>=latex] (115,-15) node {$\bullet$} node[right=5pt]{$x^*\in \partial \omega(x)$}
-- (110,10) node {$\bullet$}node {$\bullet$} node[below=5pt]{$x^*-\eta\nabla f(x)$} node[midway,right=5pt]{$-\eta\nabla f(x)$};
\draw[->,>=latex] (110,10) to[bend left=10] node[midway,above=5pt]
{$\nabla \omega^*$}  (-2,8) node {$\bullet$} node[left=5pt] {$x^+$};

\draw[dashed] (75,-50) -- (75,30);
\draw (40,-45) node {primal space $\cB$};
\draw (110,-45) node {dual space $\cB^*$};
\end{tikzpicture}
  \caption{Generalized Mirror Descent}
  \label{fig:g-mirror-descent}
\end{figure}
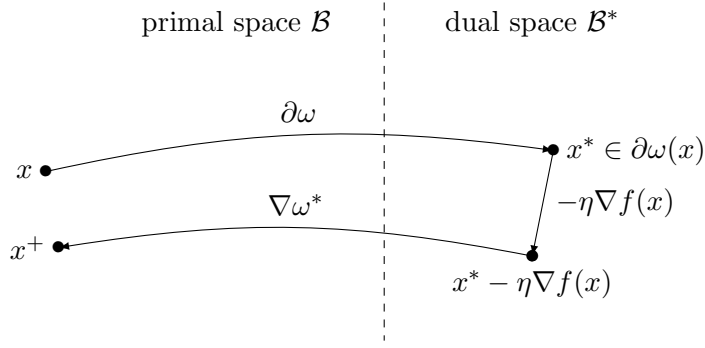

\subsection{The proposed methods}

Very recently, we introduced two new EG variants, namely Bregman EG and EP methods in \cite{2021Extragradient}, by applying the generalized mirror descent \eqref{gmd} to the EG method \eqref{EG} and to its extrapolated variant \eqref{EGve}, respectively. Now, let us recall these two methods.
The Bregman EG method generates the iterates $\{u_k\}$ for $k\geqslant0$ via the following scheme:
\begin{eqnarray}\label{BEG}
\left\{\begin{array}{lll}
\bar{u}_k =  \nabla \omega^*(u_k^*- \alpha_k F(u_k)),   \\
u_{k+1} =  \nabla \omega^*(u_k^*- \alpha_k F(\bar{u}_k)),    \\
u_{k+1}^*= u_k^*- \alpha_k F(\bar{u}_k).
\end{array} \right.
\end{eqnarray}
The Bregman EP method generates the iterates $\{u_k\}$ for $k\geqslant0$ via the following scheme:
\begin{eqnarray}\label{BEP}
\left\{\begin{array}{lll}
u_{k+1}=  \nabla \omega^*(u_k^*- \alpha_k F(u_k)-\alpha_k\beta_k(F(u_k)-F(u_{k-1}))),   \\
u_{k+1}^*= u_k^*- \alpha_k F(u_k)-\alpha_k\beta_k(F(u_k)-F(u_{k-1})).
\end{array} \right.
\end{eqnarray}
In this study, we further generalize the Bregman EG and EP methods to solve VIs by utilizing other possible subgradients from the subdifferential $\partial\omega(u_k)$. Corresponding to the Bregman EG method, we propose the following scheme, called mirror EG method.
\begin{subequations}\label{MEG}
\begin{align}[left = \empheqlbrace\,]
\label{MEGa}&\bar{u}_k =  \nabla \omega^*(u_{k,1}^*- \frac{\alpha_k}{\beta} F(u_k)), u^*_{k,1}\in \partial\omega(u_k),  \\
\label{MEGb}&u_{k+1} =  \nabla \omega^*(u_{k,2}^*- \alpha_k F(\bar{u}_k)), u^*_{k,2}\in \partial\omega(u_k),   \\
\label{MEGc}&\langle u_{k,2}^*-u_{k,1}^*, u-u_k\rangle \leqslant 0, \forall u\in \cU,\\
\label{MEGd}&\langle u_{k+1,2}^*-u_{k,2}^*+ \alpha_k F(\bar{u}_k), u-u_{k+1}\rangle \geqslant 0, \forall u\in \cU.
\end{align}
\end{subequations}
In \eqref{MEGa} and \eqref{MEGb}, we utilize two different subgradient sequences $\{u_{k,1}^*\}$ and $\{u_{k,2}^*\}$, which distinguishes the mirror EG method from the Bregman EG method. Of course, the choosing of subgradients can not be arbitrary. In \eqref{MEGc}, we add a constraint on them, in order to ensure the convergence analysis of the Bregman EG in \cite{2021Extragradient} still to be feasible. The reader who will go through our forthcoming analysis could find that it is sufficient to require the weaker constraint $ \langle u_{k,2}^*-u_{k,1}^*, u_{k+1}-u_k\rangle \leqslant 0$, or equivalently,
$$D_\omega^{u^*_{k,1}}(u_{k+1}, u_k)-D_\omega^{u^*_{k,2}}(u_{k+1}, u_k)\leqslant 0.$$
From \eqref{MEGd}, we have $\langle \alpha_k F(\bar{u}_k), u_{k+1}-u\rangle\leqslant \langle u_{k+1,2}^*-u_{k,2}^*, u-u_{k+1}\rangle$, which actually relaxes the equality
$$\langle \alpha_k F(\bar{u}_k), u_{k+1}-u\rangle= \langle u_{k+1,2}^*-u_{k,2}^*, u-u_{k+1}\rangle.$$
This relaxation does not affect the estimation of  $\langle \alpha_k F(\bar{u}_k), u_{k+1}-u\rangle$ by  $\langle u_{k+1,2}^*-u_{k,2}^*, u-u_{k+1}\rangle.$
At last, the introduction of parameter $\beta$ was inspired by the EG+ method, recently suggested in \cite{2020efficient}. The reader will find that if $\beta$ is strictly less than one, then sublinear convergence of the mirror EG method may follow under the weakly MVI monotonicity.

To summarize, our motivation of proposing the mirror EG method is to utilize the subdifferential $\partial \omega(u_k)$  more sufficiently and meanwhile guarantee it to converge safely.

Corresponding to the Bregman EP method, we similarly propose the mirror EP method as follows.
\begin{subequations}\label{MEP}
\begin{align}[left = \empheqlbrace\,]
\label{MEPa}&\xi_k=\alpha_k F(u_k)+\alpha_k\beta_k(F(u_k)-F(u_{k-1})),\\
\label{MEPb}&u_{k+1}=   \nabla \omega^*(u_k^*- \xi_k ), u^*_k\in \partial \omega(u_k),   \\
\label{MEPc}&\langle u_{k+1}^*- u_k^*+ \xi_k, u-u_{k+1}\rangle \geqslant 0, \forall u\in\cU.
\end{align}
\end{subequations}

In order to guarantee that the sequences $\{\bar{u}_k\}$ and $\{u_k\}$ in \eqref{MEG} and \eqref{MEP} are well-defined, we need to choose suitable functions $\omega$, called mirror mapping function, such that: 1) the subdifferential $\partial\omega (u_k)$  should be nonempty, otherwise $u^*_{k,i}$ can not be defined; 2) the conjugate $\omega^*$ should be differentiable on $\RR^d$ and the generated points $\bar{u}_k$ and $u_k$ by the mapping $\nabla \omega^*$ need to lie in $\cU$, otherwise $F(\bar{u}_k)$ and $F(u_k)$ would make nonsense since the operator $F$ is defined on $\cU$. To this end, we propose the following condition:
\begin{equation}\label{condw}
u_0\in \dom \partial\omega, ~~ \nabla\omega^*(\RR^d)\subset \cU,
\end{equation}
with which we immediately have $\bar{u}_k,u_k\in \nabla\omega^*(\RR^d)\subset \cU$ and hence $F(\bar{u}_k)$ and $F(u_k)$ are well-defined. Let us check that $\partial \omega(u_k)\neq \emptyset$. It suffices to show $\nabla\omega^*(\RR^d)\subset \dom \partial \omega$ since $u_k\in \nabla\omega^*(\RR^d)$. In fact, for any $v\in \nabla\omega^*(\RR^d)$, there exists $y\in \RR^d$ such that $v=\nabla\omega^*(y)$ and hence $y\in\partial \omega(v)$, which means $v\in \dom\partial \omega$. Thus, $\nabla\omega^*(\RR^d)\subset \dom \partial \omega$.

In the case of $\cU=\RR^d$, \eqref{condw} holds trivially. In the case of $\cU\neq \RR^d$, we may take $\omega$ to be of the form $\psi+I_\cU$ with some convex function $\psi$. Then, \eqref{condw} also holds because of
$$\nabla\omega^*(\RR^d)\subset \dom \omega\subset \cU,$$
where the left-hand side inclusion is due to the following observation
\begin{eqnarray}\label{eqeq}
\begin{array}{lll}
 v=\nabla\omega^*(y) &\Rightarrow& y\in \partial\omega(v) \\
&\Rightarrow & v\in \arg\min_{x\in \RR^d}\{\omega(x)-\langle y,x\rangle\} \\
&\Rightarrow & v\in \dom\omega.
\end{array}
\end{eqnarray}
For certain setups on the function $\psi$, the reader may refer to Section \ref{subsec:al}.

\subsection{Related unified framework}
Recently, the authors of \cite{2019Unifying} introduced and analyzed a unified mirror descent method which unifies both the mirror descent and dual averaging algorithms. Moreover, they applied it to solving Lipschitz continuous and monotone VI problems and proposed the unified mirror prox (UMP) method. Their methods and convergence analysis depend on a multi-valued prox-mapping $\Pi_\omega:\RR^d\rightrightarrows\cU\times \RR^d$. Concretely, $\Pi_\omega(\zeta)$ is the set of couples $(u, \upsilon)$ satisfying
\begin{align*}[left = \empheqlbrace\,]
&u=\nabla \omega^*(\zeta),\\
&\upsilon\in \partial \omega(u),\\
&\langle \upsilon-\zeta, u^\prime-u\rangle\geqslant 0, \forall u^\prime\in \cU.
\end{align*}
With this operator, their UMP can be described as follows:
\begin{subequations}
\begin{align}[left = \empheqlbrace\,]
\label{eq:ump-zeta}&\zeta_k\in \partial \omega(u_k),\\
\label{eq:ump-var}&\langle \upsilon_k-\zeta_k, u-u_k\rangle\leqslant 0, \forall u\in \cU,\\
\label{eq:ump-y}&\bar{u}_k=\nabla\omega^*(\zeta_k-\gamma F(u_k)),\\
\label{eq:ump-x}&(u_{k+1}, \upsilon_{k+1})\in \Pi_\omega(\upsilon_k-\gamma F(\bar{u}_k)).
\end{align}
\end{subequations}
It was verified that both of Nemirovski's mirror prox method \cite{2004Prox} and Nesterov's dual extrapolation \cite{2007dual} method are special cases of UMP. Interestingly, we find when the mirror mapping function $\omega$ is specialized to the $\cX$-regularizer defined in \cite{2019Unifying}, UMP can be transformed into the scheme \eqref{MEG}. In other words, they are equivalent in this special setting. This sounds surprising since they are designed with different motivations and hence have quite different appearances. We believe that each of them complements to the other.
On one hand, the existing theory on UMP will help us exploit the mirror EG method more deeply; on the other hand,
the reader may be more familiar with the formulation of the mirror EG method since it is very similar to the original form of the EG method. The latter will be highlighted by reformulating Nemirovski's mirror prox method, Nesterov's dual extrapolation method, and the Bregman EG method as special cases of the mirror EG method.

\subsection{Specialization}\label{subsec:al}
Abstract algorithmic frameworks have been constructed in the previously subsections. Now, we try to specialize the mirror mapping function $\omega$ so that concrete algorithmic examples could be discussed. First, we need the concept of compatible mirror map and its associated properties \cite{2019Unifying}.
\begin{definition}
  \label{def:mirror-maps}
  Let $\psi:\RR^d\to \RR\cup\{+\infty\}$ be a function.
Denote $\mathcal{D}_\psi:=\inte\dom\psi$, where the symbol $\inte\dom$ means the interior of $\dom\psi$.
We say that $\psi$ is a $\cU$-compatible \emph{mirror map} if
\begin{enumerate}
\item\label{item:F-lsc-strict-convex} $\psi$ is lower-semicontinuous and strictly convex,
\item\label{item:F-diff} $\psi$ is differentiable on $\mathcal{D}_\psi$,
\item\label{item:gradient-all-values} the gradient of $\psi$ takes all possible values, i.e.\ $\nabla \psi(\mathcal{D}_\psi)=\RR^d$.
\item\label{item:X-subset-bar-D} $\cU\subset \textrm{cl}(\mathcal{D}_\psi)$,
\item\label{item:X-inter-D-nonempty} $\cU\cap \mathcal{D}_\psi\neq \varnothing$.
\end{enumerate}
\end{definition}

\begin{lemma}\label{lemmirr}
Let $\psi:\RR^d\to \RR\cup\{+\infty\}$ be a $\cU$-compatible \emph{mirror map}. Then, we have
\begin{enumerate}
\item[(a).] $\nabla \psi^*(\RR^d)=\inte\dom\psi$,
\item[(b).] $\nabla \psi(\nabla\psi^*(v))=v, \forall v\in \RR^d$,
\item[(c).] if $z\in \inte\dom\psi$, then $\arg\min_{u\in\cU}D_\psi(u,z)$ exists and is unique, belonging to $\inte\dom\psi\bigcap\cU$,
\item[(d).] if $\omega=\psi+I_\cU$, then $\omega^*$ is differentiable on $\RR^d$ and $\nabla \psi(u)\in \partial \omega(u), \forall u\in \inte\dom\psi$.
\end{enumerate}
\end{lemma}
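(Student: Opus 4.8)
The plan is to prove each of the four items of Lemma \ref{lemmirr} by systematically invoking the defining properties of a $\cU$-compatible mirror map (Definition \ref{def:mirror-maps}) together with the standard conjugacy machinery recorded in Lemma \ref{lemc}. The overall strategy treats parts (a) and (b) as the foundational facts about the conjugate $\psi^*$, then uses them to deduce the projection statement (c) and the $\omega=\psi+I_\cU$ differentiability claim (d).

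**For part (a)**, I would first show the inclusion $\nabla\psi^*(\RR^d)\subset\inte\dom\psi$. Take any $v\in\RR^d$ and set $u=\nabla\psi^*(v)$. By Lemma \ref{lemc}, this is equivalent to $v\in\partial\psi(u)$, so in particular $\partial\psi(u)\neq\emptyset$, forcing $u\in\dom\psi$; the strict convexity and differentiability on $\mathcal{D}_\psi$ (properties \ref{item:F-lsc-strict-convex}--\ref{item:F-diff}) combined with the fact that a convex function is subdifferentiable on the interior of its domain let me argue $u$ must actually lie in $\inte\dom\psi$ where $\partial\psi(u)=\{\nabla\psi(u)\}$. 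For the reverse inclusion, I would take $u\in\inte\dom\psi=\mathcal{D}_\psi$, set $v=\nabla\psi(u)$, and use Lemma \ref{lemc} again to get $u\in\partial\psi^*(v)=\{\nabla\psi^*(v)\}$, so $u\in\nabla\psi^*(\RR^d)$; here property \ref{item:gradient-all-values} guarantees that $v$ ranges over all of $\RR^d$. Part (b) then follows immediately: for $v\in\RR^d$, write $u=\nabla\psi^*(v)\in\inte\dom\psi$ by (a), apply Lemma \ref{lemc} to obtain $v\in\partial\psi(u)=\{\nabla\psi(u)\}$, whence $\nabla\psi(\nabla\psi^*(v))=v$.

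**For part (c)**, the existence-and-uniqueness of the Bregman projection $\arg\min_{u\in\cU}D_\psi(u,z)$ is a standard coercivity-plus-strict-convexity argument. The objective $u\mapsto D_\psi(u,z)=\psi(u)-\psi(z)-\langle\nabla\psi(z),u-z\rangle$ is strictly convex in $u$ (inheriting strict convexity from $\psi$ via \ref{item:F-lsc-strict-convex}) and lower-semicontinuous, and the feasible set $\cU\cap\dom\psi$ is nonempty by \ref{item:X-inter-D-nonempty}; I would establish coercivity of $D_\psi(\cdot,z)$ on $\cU$ to secure a minimizer, then strict convexity gives uniqueness. The more delicate point is showing the minimizer lands in $\inte\dom\psi\cap\cU$ rather than on the boundary: this is where the mirror-map essential-smoothness behaviour is used, since $\|\nabla\psi\|\to\infty$ as one approaches $\partial\dom\psi$ prevents the minimizer from escaping to the boundary.

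**For part (d)**, with $\omega=\psi+I_\cU$ I would compute $\omega^*(y)=\sup_{u\in\cU}\{\langle y,u\rangle-\psi(u)\}$ and identify its maximizer with the Bregman projection from part (c), thereby transferring existence-uniqueness of the argmax to differentiability of $\omega^*$ on all of $\RR^d$ (a unique maximizer of the conjugate problem yields differentiability of the conjugate). The claim $\nabla\psi(u)\in\partial\omega(u)$ for $u\in\inte\dom\psi$ follows from the subdifferential sum rule $\partial\omega(u)=\partial\psi(u)+\partial I_\cU(u)=\{\nabla\psi(u)\}+N_\cU(u)$ once $u\in\inte\dom\psi\cap\cU$, since $0\in N_\cU(u)$ always gives $\nabla\psi(u)\in\partial\omega(u)$. \textbf{The main obstacle} I anticipate is part (c)'s boundary-exclusion step together with part (d)'s differentiability-of-conjugate argument: both hinge on the interplay between the domain geometry of $\psi$ and the essential-smoothness consequence of property \ref{item:gradient-all-values}, and making the "minimizer stays in the interior" claim rigorous requires carefully combining the surjectivity of $\nabla\psi$ with strict convexity rather than a one-line citation.
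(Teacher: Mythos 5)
The paper never actually proves this lemma: it is imported, together with Definition \ref{def:mirror-maps}, from the cited reference \cite{2019Unifying}, so your attempt has to be judged on its own completeness rather than against an in-paper argument. Judged that way, your skeleton (conjugacy via Lemma \ref{lemc} for (a)--(b), Bregman projection for (c), sum rule for (d)) is the right one, and part (b) and the second half of (d) are correct as written. But there is a genuine gap, and it sits exactly at the places you yourself flag: you never prove the fact on which (a), (c), and the first half of (d) all rest, namely that $\partial\psi(u)=\emptyset$ whenever $u\notin\inte\dom\psi$. For (a), the justification you give for the inclusion $\nabla\psi^*(\RR^d)\subset\inte\dom\psi$ --- strict convexity, differentiability on $\mathcal{D}_\psi$, and subdifferentiability of convex functions on the interior of the domain --- does not imply it: the function $\psi(x)=x^2+I_{[-1,1]}(x)$ satisfies all three of those facts, yet $\partial\psi(1)=[2,+\infty)\neq\emptyset$ and $\nabla\psi^*(\RR)=[-1,1]\neq(-1,1)=\inte\dom\psi$, so (a) genuinely fails for it. What excludes such examples is precisely the surjectivity condition $\nabla\psi(\mathcal{D}_\psi)=\RR^d$, which your forward-inclusion argument never invokes. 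The missing one-line mechanism is: given $v\in\partial\psi(u)$, surjectivity yields $u'\in\mathcal{D}_\psi$ with $\nabla\psi(u')=v$; then $u$ and $u'$ both minimize the strictly convex function $\psi-\langle v,\cdot\rangle$, hence $u=u'\in\inte\dom\psi$. This mechanism is the engine of the whole lemma, and it is absent.

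The same omission resurfaces in (c) in circular form: you justify boundary exclusion by ``$\|\nabla\psi\|\to\infty$ as one approaches $\partial\dom\psi$'', but that blow-up condition is \emph{not} among the five properties in Definition \ref{def:mirror-maps} (it belongs to Bubeck-style definitions of a mirror map); here it is a consequence of the very fact you have not proven, so the step assumes what it should establish. Likewise the coercivity you say you ``would establish'' for existence in (c) is not automatic and again comes from surjectivity: it furnishes an attained unconstrained minimizer of $\psi-\langle\nabla\psi(z),\cdot\rangle$, so this lsc strictly convex function has a nonempty bounded level set, hence all its level sets are bounded, and a minimizer over the closed set $\cU$ exists (feasibility from $\cU\cap\mathcal{D}_\psi\neq\emptyset$); interiority of that minimizer then follows from the optimality condition $\nabla\psi(z)\in\partial\psi(\hat u)+N_\cU(\hat u)$ (the sum rule being licensed by $\cU\cap\mathcal{D}_\psi\neq\emptyset$) combined with the mechanism above, after which your argument for (d) goes through, using surjectivity once more to write every $y\in\RR^d$ as $\nabla\psi(z)$. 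With these insertions your outline becomes a complete proof; without them, the decisive steps are either unsupported or rest on a property the definition does not grant.
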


The definition of compatible mirror map and the listed properties above help us reformulate Nemirovski's mirror prox method, Nesterov's dual extrapolation method, and the operator extrapolation method.

\begin{proposition}\label{Nemi}
Let $\psi:\RR^d\to \RR\cup\{+\infty\}$ be a  $\cU$-compatible \emph{mirror map} and let $u_0\in \inte\dom\psi\bigcap\cU$. Describe the basic steps of Nemirovski's mirror prox method as follows:
 \begin{subequations}\label{Nemi0}
\begin{align}[left = \empheqlbrace\,]
\label{Nemi01}&\bar{u}_k=\arg\min_{u\in \cU}\{\langle \alpha_kF(u_k),u\rangle + D_\psi(u,u_k)\}\\
\label{Nemi02}&u_{k+1}=\arg\min_{u\in \cU}\{\langle \alpha_kF(\bar{u}_k),u\rangle + D_\psi(u,u_k)\}.
\end{align}
  \end{subequations}
Let $\omega:=\psi+I_\cU$. Then Nemirovski's mirror prox method can be equivalently described as
\begin{subequations}\label{Nemi1}
\begin{align}[left = \empheqlbrace\,]
\label{Nemi11}&\bar{u}_k =  \nabla \omega^*(\nabla\psi(u_k)- \alpha_k F(u_k)),  \\
\label{Nemi12}&u_{k+1} =  \nabla \omega^*(\nabla\psi(u_k)- \alpha_k F(\bar{u}_k)).
\end{align}
\end{subequations}
Moreover, it is a special case of the mirror EG method with $u^*_{k,i}=\nabla\psi(u_k), i=1,2.$
\end{proposition}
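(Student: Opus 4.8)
The plan is to prove the statement in two stages: first establish the equivalence between the proximal (argmin) form \eqref{Nemi0} and the mirror form \eqref{Nemi1}, and then verify that \eqref{Nemi1} is precisely the scheme \eqref{MEG} under the stated choice of subgradients together with $\beta=1$.

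For the equivalence I would treat both lines of \eqref{Nemi0} uniformly. Writing out the Bregman distance via \eqref{Breg1} and discarding the additive terms that do not depend on the minimization variable, the $\bar{u}_k$-subproblem becomes
\begin{equation*}
\bar{u}_k=\arg\min_{u\in\RR^d}\{\omega(u)-\langle \nabla\psi(u_k)-\alpha_k F(u_k),u\rangle\},
\end{equation*}
where I have absorbed the constraint $u\in\cU$ into $\omega=\psi+I_\cU$. Setting $\zeta_k:=\nabla\psi(u_k)-\alpha_k F(u_k)$, the optimality condition for this unconstrained convex program reads $\zeta_k\in\partial\omega(\bar{u}_k)$, which by Lemma \ref{lemc} is equivalent to $\bar{u}_k\in\partial\omega^*(\zeta_k)$. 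Since $\omega^*$ is differentiable on $\RR^d$ by Lemma \ref{lemmirr}(d), this forces $\bar{u}_k=\nabla\omega^*(\zeta_k)$, which is exactly \eqref{Nemi11}; uniqueness of the minimizer is guaranteed by the strict convexity of $\psi$ (equivalently, by Lemma \ref{lemmirr}(c)). The identical argument with $F(\bar{u}_k)$ in place of $F(u_k)$ yields \eqref{Nemi12}.

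For the identification with \eqref{MEG}, I would first check by induction that every iterate lies in $\inte\dom\psi\cap\cU$, so that $\nabla\psi(u_k)$ is well defined; the base case is the hypothesis $u_0\in\inte\dom\psi\cap\cU$, and the inductive step follows because each subproblem is of the form $\arg\min_{u\in\cU}D_\psi(u,z)$ up to a linear shift, whose solution lies in $\inte\dom\psi\cap\cU$ by Lemma \ref{lemmirr}(c). Consequently $\nabla\psi(u_k)\in\partial\omega(u_k)$ by Lemma \ref{lemmirr}(d), so the choice $u^*_{k,1}=u^*_{k,2}=\nabla\psi(u_k)$ is admissible. With $\beta=1$, \eqref{MEGa} and \eqref{MEGb} collapse to \eqref{Nemi11} and \eqref{Nemi12}, while \eqref{MEGc} holds trivially since $u^*_{k,2}-u^*_{k,1}=0$. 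It remains to verify \eqref{MEGd}, i.e.
\begin{equation*}
\langle \nabla\psi(u_{k+1})-\nabla\psi(u_k)+\alpha_k F(\bar{u}_k),u-u_{k+1}\rangle\geq 0,\quad \forall u\in\cU,
\end{equation*}
which is precisely the first-order variational optimality condition of the constrained subproblem \eqref{Nemi02} defining $u_{k+1}$, whose objective has gradient $\alpha_k F(\bar{u}_k)+\nabla\psi(u_{k+1})-\nabla\psi(u_k)$ at $u_{k+1}$.

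I expect the main obstacle to be the well-definedness bookkeeping: one must confirm that the iterates never leave $\inte\dom\psi$, so that $\nabla\psi(u_k)$ exists and genuinely belongs to $\partial\omega(u_k)$, and that the passage from the $\arg\min$ optimality condition to the mapping $\nabla\omega^*$ is legitimate. Both hinge on Lemma \ref{lemmirr}, and the remainder is a matter of matching the two formulations term by term.
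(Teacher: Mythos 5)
Your proof is correct and follows essentially the same route as the paper's: the same induction via Lemma \ref{lemmirr} to keep all iterates in $\inte\dom\psi\cap\cU$ (by recognizing each subproblem as a Bregman projection with shifted center), the same passage between the argmin form and the $\nabla\omega^*$ form via the first-order optimality condition combined with Lemma \ref{lemc} and the differentiability of $\omega^*$ from Lemma \ref{lemmirr}(d), and the same identification of \eqref{MEGd} with the optimality condition of \eqref{Nemi02}. Your explicit observation that the identification requires $\beta=1$ is a small clarification that the paper's statement leaves implicit, but it does not change the argument.
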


\begin{proof}
 First, we assume that $u_k\in \inte\dom\psi\bigcap\cU$. Starting with \eqref{Nemi01}, we derive that
\begin{eqnarray}\label{b1}
\begin{array}{lll}
 \bar{u}_k &=& \arg\min_{u\in \cU}\{ \langle \alpha_kF(u_k),u\rangle +\psi(u)-\langle \nabla \psi(u_k),u\rangle\} \\
&= & \arg\min_{u\in \cU}\{ \psi(u)-\langle \nabla\psi(\nabla \psi^*(\nabla \psi(u_k)-\alpha_kF(u_k))),u\rangle\} \\
&= & \arg\min_{u\in \cU}\{ D_\psi(u, \nabla \psi^*(\nabla \psi(u_k)-\alpha_kF(u_k)))\},
\end{array}
\end{eqnarray}
where the second relationship follows from (b) of Lemma \ref{lemmirr}. Using (a) of Lemma \ref{lemmirr}, we have
$$\nabla \psi^*(\nabla \psi(u_k)-\alpha_kF(u_k))\in \inte\dom\psi.$$
Thus, from (c) of Lemma \ref{lemmirr}, it follows that $\bar{u}_k\in \inte\dom\psi\bigcap\cU$. Similarly, we have $u_{k+1}\in \inte\dom\psi\bigcap\cU$. By induction and the assumption $u_0\in \inte\dom\psi\bigcap\cU$, we actually have shown that $u_k\subset  \inte\dom\psi\bigcap\cU$. Hence, $\{\nabla \psi(u_k)\}$ are well-defined. Moreover, from (d) of Lemma \ref{lemmirr}, we have $\nabla \psi(u_k)\in \partial \omega(u_k)$ since $u_k\in \inte\dom\psi$.

Using the first-order optimality condition of \eqref{Nemi01}, we obtain
$$0\in \partial \omega(\bar{u}_k)+\alpha_k F(u_k)-\nabla \psi(u_k),$$
which can be equivalently written as \eqref{Nemi11} by using Lemma \ref{lemc}. Similarly, \eqref{Nemi02} is equivalent to \eqref{Nemi12}. The condition $u_0\in \inte\dom\psi\bigcap\cU$ and the form $\omega=\psi+I_u$ ensure \eqref{eqeq} to hold, partially due to $\inte\dom \psi\subset \nabla\omega^*(\RR^d)$ from (d) of Lemma \ref{lemmirr}.
To show  \eqref{Nemi0} is a special case of the mirror EG method, it remains to show that
$$\langle \nabla \psi(u_{k+1})-\nabla \psi(u_k)+\alpha_kF(\bar{u}_k), u-u_{k+1}\rangle\geqslant 0, \forall u\in \cU,$$
which is just the first-order optimality condition of \eqref{Nemi02}. This completes the proof.
\end{proof}

\begin{proposition}\label{Nest}
Let $\psi:\RR^d\to \RR\cup\{+\infty\}$ be $\cU$-compatible \emph{mirror map} and let $u_0\in \inte\dom\psi\bigcap\cU$. Denote $\omega:=\psi+I_\cU$.  Describe the basic steps of Nesterov's dual extrapolation method as follows:
 \begin{subequations}\label{Nest1}
\begin{align}[left = \empheqlbrace\,]
\label{Nest11}&u_k=\nabla \omega^*(\upsilon_k),\\
\label{Nest12}&\bar{u}_k=\arg\min_{u\in \cU}\{\langle \alpha_kF(u_k),u\rangle + D_\psi(u,u_k)\}\\
\label{Nest13}&\upsilon_{k+1}=\upsilon_k-\alpha_kF(\bar{u}_k).
\end{align}
  \end{subequations}
Then, Nesterov's dual extrapolation method can be equivalently described as
\begin{subequations}\label{Nest2}
\begin{align}[left = \empheqlbrace\,]
\label{Nest21}&\bar{u}_k =  \nabla \omega^*(\nabla\psi(u_k)- \alpha_k F(u_k)),  \\
\label{Nest22}&u_{k+1} =  \nabla \omega^*(\upsilon_k- \alpha_k F(\bar{u}_k)),\\
\label{Nest23}&\upsilon_{k+1}=\upsilon_k-\alpha_kF(\bar{u}_k).
\end{align}
\end{subequations}
Moreover, it is a special case of the mirror EG method with $u^*_{k,1}=\nabla\psi(u_k)$ and $u^*_{k,2}=\upsilon_k$.
\end{proposition}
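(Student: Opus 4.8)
The plan is to follow the same template as the proof of Proposition~\ref{Nemi}: first settle well-definedness by an induction showing every iterate stays in $\inte\dom\psi\bigcap\cU$, then translate the argmin and dual-averaging updates into the $\nabla\omega^*$ form, and finally check the four defining relations of \eqref{MEG} one by one under the prescribed subgradients $u^*_{k,1}=\nabla\psi(u_k)$, $u^*_{k,2}=\upsilon_k$ and $\beta=1$.

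First I would establish that $u_k=\nabla\omega^*(\upsilon_k)\in\inte\dom\psi\bigcap\cU$ for every $k$. Writing $\omega=\psi+I_\cU$ and using (b) of Lemma~\ref{lemmirr}, the minimizer $\nabla\omega^*(\upsilon_k)=\arg\min_{u\in\cU}\{\psi(u)-\langle\upsilon_k,u\rangle\}$ coincides with $\arg\min_{u\in\cU}D_\psi(u,\nabla\psi^*(\upsilon_k))$; since $\nabla\psi^*(\upsilon_k)\in\inte\dom\psi$ by (a), part (c) of Lemma~\ref{lemmirr} places $u_k$ in $\inte\dom\psi\bigcap\cU$. Consequently $\nabla\psi(u_k)$ is well-defined and, by (d) of Lemma~\ref{lemmirr}, $\nabla\psi(u_k)\in\partial\omega(u_k)$; moreover, applying Lemma~\ref{lemc} to the relation $u_k=\nabla\omega^*(\upsilon_k)$ yields $\upsilon_k\in\partial\omega(u_k)$. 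These two memberships are exactly what the mirror EG method requires of $u^*_{k,1}$ and $u^*_{k,2}$.

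Next, the equivalence of \eqref{Nest1} and \eqref{Nest2} is nearly immediate. Step \eqref{Nest12} is handled as in Proposition~\ref{Nemi}: its first-order optimality condition is $0\in\partial\omega(\bar{u}_k)+\alpha_kF(u_k)-\nabla\psi(u_k)$, which Lemma~\ref{lemc} rewrites as \eqref{Nest21}. For the primal iterate, combining \eqref{Nest11} at index $k+1$ with the dual update \eqref{Nest13} gives $u_{k+1}=\nabla\omega^*(\upsilon_{k+1})=\nabla\omega^*(\upsilon_k-\alpha_kF(\bar{u}_k))$, which is \eqref{Nest22}; and \eqref{Nest13} is literally \eqref{Nest23}.

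Finally, I would verify the four relations of \eqref{MEG}. Relations \eqref{MEGa} and \eqref{MEGb} are precisely \eqref{Nest21} and \eqref{Nest22} under the stated choices, and the required subgradient memberships were secured above. Relation \eqref{MEGd} is trivial: by \eqref{Nest23} the vector $u^*_{k+1,2}-u^*_{k,2}+\alpha_kF(\bar{u}_k)=\upsilon_{k+1}-\upsilon_k+\alpha_kF(\bar{u}_k)$ vanishes, so the inequality holds with equality. The only relation demanding genuine work is \eqref{MEGc}, and this is where I expect the main obstacle: I must show $\langle\upsilon_k-\nabla\psi(u_k),u-u_k\rangle\leq 0$ for all $u\in\cU$. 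Because $u_k\in\inte\dom\psi$, the interior-point constraint qualification lets the subdifferential sum rule apply to $\omega=\psi+I_\cU$, giving $\partial\omega(u_k)=\nabla\psi(u_k)+N_\cU(u_k)$, where $N_\cU(u_k)$ denotes the normal cone to $\cU$ at $u_k$. Since $\upsilon_k\in\partial\omega(u_k)$, the difference $\upsilon_k-\nabla\psi(u_k)$ lies in $N_\cU(u_k)$, and the defining property of the normal cone is exactly the inequality \eqref{MEGc}. Justifying this additive decomposition cleanly is the crux; once it is in place, all four relations of \eqref{MEG} are verified and the proof is complete.
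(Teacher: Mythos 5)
Your proposal is correct and follows essentially the same route as the paper's proof: deduce \eqref{Nest21} via the first-order optimality condition of \eqref{Nest12} as in Proposition \ref{Nemi}, obtain \eqref{Nest22} by combining \eqref{Nest11} with \eqref{Nest13}, and reduce the verification of \eqref{MEG} to the condition \eqref{MEGc}, which follows from $\upsilon_k\in\partial\omega(u_k)=\nabla\psi(u_k)+N_\cU(u_k)$. Your write-up merely makes explicit some points the paper leaves implicit (the induction for well-definedness, the choice $\beta=1$, and the fact that \eqref{MEGd} holds with equality because $\upsilon_{k+1}-\upsilon_k+\alpha_kF(\bar{u}_k)=0$), which is fine.
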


\begin{proof}
Repeating the argument in the proof of Proposition \ref{Nemi}, we deduce \eqref{Nest21} from \eqref{Nest12}. Using \eqref{Nest11} and \eqref{Nest13}, we deduce that $u_{k+1}= \nabla \omega^*(\upsilon_{k+1})=\nabla \omega^*(\upsilon_k- \alpha_k F(\bar{u}_k))$, which is just \eqref{Nest22}. Thus, the equivalence between \eqref{Nest1} and \eqref{Nest2} follows. To show that both of them are special cases of the mirror EG method, it remains to show that
$$\langle \upsilon_k-\nabla \psi(u_k), u-u_k\rangle \leqslant 0, \forall u\in\cU,$$
which follows by observing that $u_k=\nabla \omega^*(\upsilon_k)$ implies
$$\upsilon_k\in \partial \omega(u_k)=\nabla\psi(u_k)+N_\cU(u_k),$$
where $N_\cU(u_k)=\{y: \langle y, u-u_k\rangle \leqslant 0, \forall u\in \cU\}.$ This completes the proof.
\end{proof}

\begin{proposition}\label{OE}
Let $\psi:\RR^d\to \RR\cup\{+\infty\}$ be a $\cU$-compatible \emph{mirror map} and let $u_0\in \inte\dom\psi\bigcap\cU$. Denote $\omega:=\psi+I_\cU$.  Describe the basic steps of operator extrapolation method as follows:
\begin{equation}\label{OE1}
u_{k+1}= \arg\min_{u\in \cU}\{\langle \alpha_k F(u_k)+\alpha_k\beta_k(F(u_k)-F(u_{k-1})),u\rangle + D_\psi(u,u_k)\}.
\end{equation}
Then,  the operator extrapolation method can be equivalently described as
\begin{equation}\label{OE2}
u_{k+1}=  \nabla \omega^*(\nabla\psi(u_k)- \alpha_k F(u_k)-\alpha_k\beta_k(F(u_k)-F(u_{k-1}))).
\end{equation}
Moreover, it is a special case of the mirror EP method with $u^*_k=\nabla\psi(u_k)$.
\end{proposition}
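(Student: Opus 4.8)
The plan is to follow the template established in the proofs of Propositions \ref{Nemi} and \ref{Nest}, since the operator extrapolation step \eqref{OE1} is structurally the same Bregman-projection subproblem, only with the single vector $\alpha_k F(u_k)$ replaced by the extrapolated vector $\xi_k := \alpha_k F(u_k) + \alpha_k\beta_k(F(u_k)-F(u_{k-1}))$. The whole argument is then a direct transcription, with the only substantive new point being the identification of the constraint \eqref{MEPc} with the variational inequality characterizing \eqref{OE1}.

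First I would establish well-definedness by induction. Assuming $u_k\in\inte\dom\psi\cap\cU$, I expand the objective in \eqref{OE1} via $D_\psi(u,u_k)=\psi(u)-\psi(u_k)-\langle\nabla\psi(u_k),u-u_k\rangle$ and absorb the linear terms, so the subproblem reads $\arg\min_{u\in\cU}\{\psi(u)-\langle\nabla\psi(u_k)-\xi_k,u\rangle\}$. Using part (b) of Lemma \ref{lemmirr} to write $\nabla\psi(u_k)-\xi_k=\nabla\psi(\nabla\psi^*(\nabla\psi(u_k)-\xi_k))$, this becomes $\arg\min_{u\in\cU}D_\psi(u,\nabla\psi^*(\nabla\psi(u_k)-\xi_k))$; part (a) guarantees the reference point lies in $\inte\dom\psi$, and part (c) then yields a unique $u_{k+1}\in\inte\dom\psi\cap\cU$. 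With the base case $u_0\in\inte\dom\psi\cap\cU$, induction gives $u_k\in\inte\dom\psi\cap\cU$ for all $k$, and part (d) gives $\nabla\psi(u_k)\in\partial\omega(u_k)$, legitimizing the choice $u_k^*=\nabla\psi(u_k)$. The only extra care relative to Proposition \ref{Nemi} is that $\xi_k$ also depends on $u_{k-1}$, but $F(u_{k-1})$ is a fixed vector at step $k$, so the subproblem is unchanged in form.

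Next I would prove the equivalence of \eqref{OE1} and \eqref{OE2}. Writing the subproblem as the unconstrained minimization of $\langle\xi_k-\nabla\psi(u_k),u\rangle+\omega(u)$ over $\RR^d$ (using $\omega=\psi+I_\cU$), its first-order optimality condition is $\nabla\psi(u_k)-\xi_k\in\partial\omega(u_{k+1})$. By Lemma \ref{lemc} this is equivalent to $u_{k+1}\in\partial\omega^*(\nabla\psi(u_k)-\xi_k)$, and since part (d) of Lemma \ref{lemmirr} ensures $\omega^*$ is differentiable, it collapses to $u_{k+1}=\nabla\omega^*(\nabla\psi(u_k)-\xi_k)$, which is exactly \eqref{OE2}.

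Finally I would check that the scheme is a special case of the mirror EP method \eqref{MEP} with $u_k^*=\nabla\psi(u_k)$. Formula \eqref{MEPb} with this choice reads $u_{k+1}=\nabla\omega^*(\nabla\psi(u_k)-\xi_k)$, matching \eqref{OE2}; it remains to verify the constraint \eqref{MEPc}. Taking $u_{k+1}^*=\nabla\psi(u_{k+1})$ (which lies in $\partial\omega(u_{k+1})$ by the induction above), \eqref{MEPc} becomes $\langle\nabla\psi(u_{k+1})-\nabla\psi(u_k)+\xi_k,u-u_{k+1}\rangle\geq 0$ for all $u\in\cU$, which is precisely the first-order optimality condition of the constrained problem \eqref{OE1} written through $\nabla_u D_\psi(u_{k+1},u_k)=\nabla\psi(u_{k+1})-\nabla\psi(u_k)$. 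This matching of \eqref{MEPc} with the optimality condition of \eqref{OE1} is the one step requiring genuine attention; everything else reuses the arguments already given for Nemirovski's and Nesterov's methods.
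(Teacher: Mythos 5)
Your proposal is correct and follows exactly the route the paper intends: the paper omits this proof, stating only that it is ``similar to that of Proposition \ref{Nest},'' and your argument is precisely that transcription---well-definedness via parts (a)--(d) of Lemma \ref{lemmirr} as in Proposition \ref{Nemi}, equivalence of \eqref{OE1} and \eqref{OE2} via the first-order optimality condition and Lemma \ref{lemc}, and identification of \eqref{MEPc} with the optimality condition of \eqref{OE1}. No gaps; the treatment of the extrapolated vector $\xi_k$ as a fixed vector at step $k$ is the right observation that makes the earlier template carry over verbatim.
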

The proof is similar to that of Proposition \ref{Nest}; we omit the details here.
Comparing the equivalent reformulations \eqref{Nemi1} and \eqref{Nest2} with the Bregman EG method \eqref{BEG}, we find that the latter is indeed a new algorithm in the sense it does not need to compute $\nabla\psi(u_k)$ at all. Also, the Bregman EP method \eqref{BEP} is new, compared with \eqref{OE1}, in the same sense.

\subsection{Examplary applications}
In this part, we apply our algorithmic frameworks to the constrained saddle point optimization problem \eqref{sadd} with the constraints given by
$$\cX=\Delta_p, \cY=\Delta_q,$$
where we use the symbol $\Delta_d=\{u\in \RR^d_+: \sum_{i=1}^d u_i=1\}$ to denote the $(d-1)$-dimensional probability simplex.
Thus, the constraint set in the corresponding VI problem \eqref{VI} is $\cU=\cX\times \cY$. We choose $\omega=\psi+I_\cU$ with $\psi(u)=\sum u_i\ln u_i-u_i$, whose conjugate function and gradients can be computed as follows:
$$\psi^*(v)=\sum_i \exp(v_i), \nabla\psi(u)=\ln(u), \nabla\psi^*(v)=\exp(v).$$
Note that the dimensions of variables in the above functions are not indexed so that the functions could be applied to different dimensions of variables.

In our algorithmic frameworks, the most important ingredient is to compute $\nabla \omega^*$. To do this, we deduce the computing $u=\nabla\omega^*(v)$ with $v\in \RR^{p+q}$ being given as follows:
\begin{eqnarray}\label{computu}
\begin{array}{lll}
  u &=& \arg\min_z\{\omega(z)-\langle v,z\rangle \}\\
&= & \arg\min_{z\in\cU}\{\psi(z)-\langle v,z\rangle\}.
\end{array}
\end{eqnarray}
Denote $z=(z_x,z_y)$ and $v=(v_x,v_y)$, where $z_x$ means the subvector of $z$ that lies in $\cX$. Using the separability of the objective function and the variables $x, y$. The computing \eqref{computu} can be transformed into the following two problems:
\begin{subequations}
\begin{align}
\label{compx}x= \arg\min_{z_x\in\cX} \{\psi(z_x)-\langle v_x,z_x\rangle\},\\
\label{compy}y = \arg\min_{z_y\in\cY}\{\psi(z_y)-\langle v_y,z_y\rangle\}.
\end{align}
\end{subequations}
Both of them can be expressed as Bregman's projection onto the constrained sets, since
\begin{eqnarray}\label{compx1}
\begin{array}{lll}
x &= \arg\min_{z_x\in\cX} \{\psi(z_x)-\langle \nabla\psi(\nabla\psi^*(v_x)),z_x\rangle\}\\
&=\arg\min_{z_x\in\cX} \{D_\psi(z_x,\nabla\psi(\nabla\psi^*(v_x)))\}.
\end{array}
\end{eqnarray}
Thus, using the formula in Example 6.15 in \cite{1997Legendre}, we obtain
\begin{equation}\label{xyf}
x=\frac{\nabla\psi(\nabla\psi^*(v_x))}{\sum_i (\nabla\psi(\nabla\psi^*(v_x)))_i}, ~~y=\frac{\nabla\psi(\nabla\psi^*(v_y))}{\sum_i (\nabla\psi(\nabla\psi^*(v_y)))_i}.
\end{equation}
Now, we are ready to present the following algorithmic schemes for the constrained saddle point problem. The first is obtained by applying Nemirovski's mirror prox method \eqref{Nemi1}:
\begin{subequations}\label{A1}
\begin{align}[left = \empheqlbrace\,]
&\bar{x}_k=\frac{\exp(\ln x_k-\alpha_k\nabla_xf(x_k,y_k))}{\sum_i (\exp(\ln x_k-\alpha_k\nabla_xf(x_k,y_k)))_i}, ~\bar{y}_k=\frac{\exp(\ln y_k-\alpha_k\nabla_yf(x_k,y_k))}{\sum_i (\exp(\ln y_k-\alpha_k\nabla_yf(x_k,y_k)))_i},  \\
&x_{k+1}=\frac{\exp(\ln x_k-\alpha_k\nabla_xf(\bar{x}_k,\bar{y}_k))}{\sum_i (\exp(\ln x_k-\alpha_k\nabla_xf(\bar{x}_k,\bar{y}_k)))_i}, ~y_{k+1}=\frac{\exp(\ln y_k-\alpha_k\nabla_yf(\bar{x}_k,\bar{y}_k))}{\sum_i (\exp(\ln y_k-\alpha_k\nabla_yf(\bar{x}_k,\bar{y}_k)))_i}.
\end{align}
\end{subequations}
The second is obtained by applying Nesterov's dual extrapolation method \eqref{Nest2}:
\begin{subequations}\label{A2}
\begin{align}[left = \empheqlbrace\,]
&\bar{x}_k=\frac{\exp(\ln x_k-\alpha_k\nabla_xf(x_k,y_k))}{\sum_i (\exp(\ln x_k-\alpha_k\nabla_xf(x_k,y_k)))_i}, ~\bar{y}_k=\frac{\exp(\ln y_k-\alpha_k\nabla_yf(x_k,y_k))}{\sum_i (\exp(\ln y_k-\alpha_k\nabla_yf(x_k,y_k)))_i},  \\
&x_{k+1}=\frac{\exp(x_k^*-\alpha_k\nabla_xf(\bar{x}_k,\bar{y}_k))}{\sum_i (\exp(x_k^*-\alpha_k\nabla_xf(\bar{x}_k,\bar{y}_k)))_i}, ~y_{k+1}=\frac{\exp(y_k^*-\alpha_k\nabla_yf(\bar{x}_k,\bar{y}_k))}{\sum_i (\exp(y_k^*-\alpha_k\nabla_yf(\bar{x}_k,\bar{y}_k)))_i},\\
&(x^*_{k+1},y^*_{k+1})=(x^*_k,y^*_k)-\alpha_k(\nabla_xf(\bar{x}_k,\bar{y}_k), -\nabla_yf(\bar{x}_k,\bar{y}_k)).
\end{align}
\end{subequations}
The third is obtained by applying Bregman EG method \eqref{BEG}:
\begin{subequations}\label{A3}
\begin{align}[left = \empheqlbrace\,]
&\bar{x}_k=\frac{\exp(x_k^*-\alpha_k\nabla_xf(x_k,y_k))}{\sum_i (\exp(x^*_k-\alpha_k\nabla_xf(x_k,y_k)))_i}, ~\bar{y}_k=\frac{\exp(y_k^*-\alpha_k\nabla_yf(x_k,y_k))}{\sum_i (\exp(y^*_k-\alpha_k\nabla_yf(x_k,y_k)))_i},  \\
&x_{k+1}=\frac{\exp(x_k^*-\alpha_k\nabla_xf(\bar{x}_k,\bar{y}_k))}{\sum_i (\exp(x_k^*-\alpha_k\nabla_xf(\bar{x}_k,\bar{y}_k)))_i}, ~y_{k+1}=\frac{\exp(y_k^*-\alpha_k\nabla_yf(\bar{x}_k,\bar{y}_k))}{\sum_i (\exp(y_k^*-\alpha_k\nabla_yf(\bar{x}_k,\bar{y}_k)))_i},\\
&(x^*_{k+1},y^*_{k+1})=(x^*_k,y^*_k)-\alpha_k(\nabla_xf(\bar{x}_k,\bar{y}_k), -\nabla_yf(\bar{x}_k,\bar{y}_k)).
\end{align}
\end{subequations}
Denote
\begin{subequations}\label{A5}
\begin{align}[left = \empheqlbrace\,]
&\xi_k^x:=\alpha_k\nabla_xf(x_k,y_k)+\alpha_k\beta_k(\nabla_xf(x_k,y_k)-\nabla_xf(x_{k-1},y_{k-1})),\\
&\xi_k^y:=\alpha_k\nabla_yf(x_k,y_k)+\alpha_k\beta_k(\nabla_yf(x_k,y_k)-\nabla_yf(x_{k-1},y_{k-1})).
\end{align}
\end{subequations}
Then, the fourth scheme is obtained by applying the operator extrapolation method \eqref{OE1}:
\begin{equation}\label{A4}
 x_{k+1}=\frac{\exp(\ln x_k-\xi_k^x)}{\sum_i (\exp(\ln x_k-\xi_k^x))_i}, ~y_{k+1}=\frac{\exp(\ln y_k-\xi_k^y)}{\sum_i (\exp(\ln y_k-\xi_k^y))_i}.
\end{equation}
The last one is obtained by applying the EP method \eqref{BEP}:
\begin{subequations}\label{A5}
\begin{align}[left = \empheqlbrace\,]
&x_{k+1}=\frac{\exp(x_k^*-\xi_k^x)}{\sum_i (\exp(x_k^*-\xi_k^x))_i}, ~y_{k+1}=\frac{\exp(y_k^*-\xi_k^y)}{\sum_i (\exp(y_k^*-\xi_k^y))_i},\\
&(x^*_{k+1},y^*_{k+1})=(x^*_k,y^*_k)-(\xi_k^x,\xi_k^y).
\end{align}
\end{subequations}
It should be noted that the choices of parameters $\alpha_k$ and $\beta_k$ depend on the saddle-point functions.

\section{Convergence analysis}
In this section, we deduce a group of convergence results for the proposed mirror frameworks.

\subsection{Convergence analysis for mirror EG method}
This part is devoted to the convergence analysis for the mirror EG method. We first introduce an important lemma.
\begin{lemma}\label{lemMEG}
Suppose that the operator $F$ is $\lambda$-relatively Lipschitz continuous with respect to $\omega$. Let $\{\bar{u}_k, u_k\}_{k\geqslant 0}$ be generated by the mirror extragradient method with the step parameters $\alpha_k$ and $\beta$ satisfying $0<\lambda\alpha_k\leqslant \beta$.
Then, for any $u\in \cU$ we have
\begin{equation}\label{recur1}
\langle \alpha_k F(\bar{u}_k), \bar{u}_k-u \rangle \leqslant D_\omega^{u_{k,2}^*}(u,u_k) - D_\omega^{u_{k+1,2}^*}(u,u_{k+1})+(\beta-1)D_\omega^{u_{k,2}^*}(u_{k+1},u_k).
\end{equation}
Moreover, if $\omega$ is a $\mu_0$-strongly convex function with $L_0$-Lipschitz-continuous gradient, then we have the following tight estimation:
\begin{equation}\label{bound1}
\frac{\mu_0\alpha^2_k}{2L_0^2}\|F(\bar{u}_k)\|_*^2\leqslant D_\omega(u_{k+1},u_k)\leqslant \frac{\alpha^2_k}{2\mu_0}\|F(\bar{u}_k)\|_*^2.
\end{equation}
\end{lemma}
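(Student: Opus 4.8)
The plan is to prove the two estimates separately, relying on the three-point identity \eqref{Bregdis1}, the defining relations \eqref{MEGa}--\eqref{MEGd}, and Assumption \ref{reLip}. For \eqref{recur1} I would start by splitting through the point $u_{k+1}$:
$$\langle\alpha_k F(\bar{u}_k),\bar{u}_k-u\rangle=\langle\alpha_k F(\bar{u}_k),\bar{u}_k-u_{k+1}\rangle+\langle\alpha_k F(\bar{u}_k),u_{k+1}-u\rangle.$$
The second summand is handled by \eqref{MEGd} with the test point $u$, which yields $\langle\alpha_k F(\bar{u}_k),u_{k+1}-u\rangle\le\langle u_{k+1,2}^*-u_{k,2}^*,u-u_{k+1}\rangle$; applying \eqref{Bregdis1} with $(p,p^*)=(u_{k+1},u_{k+1,2}^*)$ and $(q,q^*)=(u_k,u_{k,2}^*)$ rewrites this right-hand side as $D_\omega^{u_{k,2}^*}(u,u_k)-D_\omega^{u_{k+1,2}^*}(u,u_{k+1})-D_\omega^{u_{k,2}^*}(u_{k+1},u_k)$.

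For the first summand I would further decompose $F(\bar{u}_k)=F(u_k)+(F(\bar{u}_k)-F(u_k))$. The key observation is that \eqref{MEGa} together with Lemma \ref{lemc} gives $\bar{u}_k^*:=u_{k,1}^*-\tfrac{\alpha_k}{\beta}F(u_k)\in\partial\omega(\bar{u}_k)$, so that $\alpha_k F(u_k)=\beta(u_{k,1}^*-\bar{u}_k^*)$; feeding this into \eqref{Bregdis1} (now with $(p,p^*)=(\bar{u}_k,\bar{u}_k^*)$, $(q,q^*)=(u_k,u_{k,1}^*)$ and test point $u_{k+1}$) expresses $\langle\alpha_k F(u_k),\bar{u}_k-u_{k+1}\rangle$ as $\beta[D_\omega^{u_{k,1}^*}(u_{k+1},u_k)-D_\omega^{\bar{u}_k^*}(u_{k+1},\bar{u}_k)-D_\omega^{u_{k,1}^*}(\bar{u}_k,u_k)]$. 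The cross term $\langle\alpha_k(F(\bar{u}_k)-F(u_k)),\bar{u}_k-u_{k+1}\rangle$ is bounded by Assumption \ref{reLip} with $(v,u,z)=(\bar{u}_k,u_k,u_{k+1})$, where I would bound the two infima from above by the particular subgradients $u_{k,1}^*$ and $\bar{u}_k^*$, obtaining $\lambda\alpha_k(D_\omega^{u_{k,1}^*}(\bar{u}_k,u_k)+D_\omega^{\bar{u}_k^*}(u_{k+1},\bar{u}_k))$.

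Collecting the first-summand terms, the coefficients of $D_\omega^{\bar{u}_k^*}(u_{k+1},\bar{u}_k)$ and $D_\omega^{u_{k,1}^*}(\bar{u}_k,u_k)$ are both $\lambda\alpha_k-\beta\le 0$ by the stepsize condition, so both nonnegative Bregman terms can be discarded, leaving $\langle\alpha_k F(\bar{u}_k),\bar{u}_k-u_{k+1}\rangle\le\beta D_\omega^{u_{k,1}^*}(u_{k+1},u_k)$. Finally \eqref{MEGc} at $u=u_{k+1}$ is equivalent, via the definition of the Bregman distance, to $D_\omega^{u_{k,1}^*}(u_{k+1},u_k)\le D_\omega^{u_{k,2}^*}(u_{k+1},u_k)$, which upgrades the bound to $\beta D_\omega^{u_{k,2}^*}(u_{k+1},u_k)$; adding the two summands produces \eqref{recur1}, with $\beta-1$ appearing as the coefficient of $D_\omega^{u_{k,2}^*}(u_{k+1},u_k)$. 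The main bookkeeping obstacle here is keeping the three instances of \eqref{Bregdis1} consistent and tracking the signs so that exactly the nonnegative terms are dropped.

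For \eqref{bound1}, note first that strong convexity plus a Lipschitz gradient makes $\omega$ differentiable, so $u_{k,2}^*=\nabla\omega(u_k)$, and \eqref{MEGb} together with Lemma \ref{lemc} gives the clean relation $\nabla\omega(u_k)-\nabla\omega(u_{k+1})=\alpha_k F(\bar{u}_k)$. The upper bound then follows from the standard consequence of $\mu_0$-strong convexity of $\omega$ (equivalently $\tfrac{1}{\mu_0}$-smoothness of $\omega^*$), namely $D_\omega(u_{k+1},u_k)\le\tfrac{1}{2\mu_0}\|\nabla\omega(u_{k+1})-\nabla\omega(u_k)\|_*^2=\tfrac{\alpha_k^2}{2\mu_0}\|F(\bar{u}_k)\|_*^2$. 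For the lower bound I would instead stay in the primal: \eqref{Bregdis3} gives $D_\omega(u_{k+1},u_k)\ge\tfrac{\mu_0}{2}\|u_{k+1}-u_k\|^2$, while $L_0$-Lipschitzness of $\nabla\omega$ yields $\|u_{k+1}-u_k\|\ge\tfrac1{L_0}\|\nabla\omega(u_{k+1})-\nabla\omega(u_k)\|_*=\tfrac{\alpha_k}{L_0}\|F(\bar{u}_k)\|_*$, and combining the two gives $\tfrac{\mu_0\alpha_k^2}{2L_0^2}\|F(\bar{u}_k)\|_*^2$. The only subtlety is that the tight upper constant $\tfrac{1}{2\mu_0}$ requires the dual (conjugate) route rather than the primal smoothness estimate, whereas the lower bound is cleanest in the primal; choosing the correct route for each side is what makes the estimate tight.
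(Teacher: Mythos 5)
Your proof of \eqref{recur1} is, step for step, the paper's own argument: the same splitting through $u_{k+1}$, the same identification $\bar{u}_k^*=u_{k,1}^*-\tfrac{\alpha_k}{\beta}F(u_k)\in\partial\omega(\bar{u}_k)$, the same applications of the three-point identity \eqref{Bregdis1}, the same use of relative Lipschitzness at $(\bar{u}_k,u_k,u_{k+1})$ with the two infima majorized by the particular subgradients $u_{k,1}^*$ and $\bar{u}_k^*$, and the same invocation of \eqref{MEGc} at $u=u_{k+1}$ to trade $D_\omega^{u_{k,1}^*}(u_{k+1},u_k)$ for $D_\omega^{u_{k,2}^*}(u_{k+1},u_k)$. (Your sign bookkeeping in fact silently corrects a typo in the paper's \eqref{est5}, where the two Bregman terms should be added, not subtracted; with the plus sign they cancel exactly against the corresponding negative terms, as in your argument.) The lower bound in \eqref{bound1} is also identical to the paper's: both derive $\nabla\omega(u_{k+1})=\nabla\omega(u_k)-\alpha_kF(\bar{u}_k)$ from \eqref{MEGb} and combine \eqref{Bregdis3} with the $L_0$-Lipschitzness of $\nabla\omega$.

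The one genuine divergence is the upper bound in \eqref{bound1}. You use the exact gradient identity together with the dual inequality $D_\omega(x,y)\le\tfrac{1}{2\mu_0}\|\nabla\omega(x)-\nabla\omega(y)\|_*^2$ (equivalently, $1/\mu_0$-smoothness of $\omega^*$). The paper never uses the exact identity for this direction: it tests \eqref{MEGd} at $u=u_k$, writes $\langle u_{k+1,2}^*-u_{k,2}^*,u_{k+1}-u_k\rangle$ as a sum of two Bregman distances, lower-bounds one of them by $\tfrac{\mu_0}{2}\|u_{k+1}-u_k\|^2$ via \eqref{Bregdis3}, and closes with Young's inequality. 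Both routes yield the same tight constant $\tfrac{\alpha_k^2}{2\mu_0}$. Yours is shorter, and your observation about why the dual route (rather than primal smoothness of $\omega$) is needed for tightness is correct; the cost is that it leans on a duality fact not among the paper's stated tools, though it is standard and easy to supply by applying the strong-convexity inequality \eqref{sc01} to $z\mapsto\omega(z)-\langle\nabla\omega(u_k),z\rangle$. The paper's route has the mild structural advantage of using \eqref{MEGd} only as an inequality, whereas your upper bound needs $\nabla\omega\circ\nabla\omega^*=\mathrm{id}$ (harmless here, since the standing assumptions force the unconstrained regime, and the paper relies on the same identity for the lower bound). One cosmetic omission: the paper justifies the word ``tight'' by checking that both inequalities in \eqref{bound1} become equalities for $\omega=\tfrac12\|\cdot\|^2$; you assert tightness but do not verify it.
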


\begin{proof}
It follows from \eqref{MEGa} that
$$\bar{u}_k^*:=u^*_{k,1}-\frac{\alpha_k}{\beta}F(u_k)\in \partial \omega(\bar{u}_k).$$
Noting that $u^*_{k,1}\in \partial \omega(u_k)$ and applying Lemma \ref{lemBreg}, for any $u\in \cU$ we have that
\begin{eqnarray}\label{est1}
\begin{array}{lll}
  \langle \frac{\alpha_k}{\beta} F(u_k), \bar{u}_k-u\rangle &=& \langle u^*_{k,1}-\bar{u}_k^*, \bar{u}_k-u\rangle\\
&= &  D_\omega^{u_{k,1}^*}(u,u_k) - D_\omega^{\bar{u}_k^*}(u,\bar{u}_k) - D_\omega^{u_{k,1}^*}(\bar{u}_k,u_k).
\end{array}
\end{eqnarray}
Using \eqref{MEGd} and applying Lemma \ref{lemBreg} again, for any $u\in \cU$ we have that
\begin{eqnarray}\label{est2}
\begin{array}{lll}
  \langle \alpha_k F(\bar{u}_k), u_{k+1}-u \rangle &\leqslant & \langle u_{k+1,2}^*-u^*_{k,2}, u-u_{k+1}\rangle\\
&= & D_\omega^{u_{k,2}^*}(u,u_k) - D_\omega^{u_{k+1,2}^*}(u,u_{k+1}) - D_\omega^{u_{k,2}^*}(u_{k+1},u_k).
\end{array}
\end{eqnarray}
Combining \eqref{est1} with $u=u_{k+1}$ and \eqref{est2}, we derive that
\begin{eqnarray}\label{est3}
\begin{array}{lll}
&&  \langle \alpha_k F(\bar{u}_k), \bar{u}_k-u \rangle \\
&=& \langle \alpha_k F(\bar{u}_k), u_{k+1}-u \rangle +\langle \alpha_k F(\bar{u}_k), \bar{u}_k-u_{k+1}\rangle\\
&=& \langle \alpha_k F(\bar{u}_k), u_{k+1}-u \rangle +\langle \alpha_k F(u_k), \bar{u}_k-u_{k+1}\rangle +\alpha_k\langle F(\bar{u}_k)-F(u_k), \bar{u}_k-u_{k+1}\rangle\\
&\leqslant &  D_\omega^{u_{k,2}^*}(u,u_k) - D_\omega^{u_{k+1,2}^*}(u,u_{k+1})+\beta D_\omega^{u_{k,1}^*}(u_{k+1},u_k) - D_\omega^{u_{k,2}^*}(u_{k+1},u_k) + \\
& &\alpha_k\langle F(\bar{u}_k)-F(u_k), \bar{u}_k-u_{k+1}\rangle- \beta D_\omega^{u_{k,1}^*}(\bar{u}_k,u_k)-\beta D_\omega^{\bar{u}_k^*}(u_{k+1},\bar{u}_k).
\end{array}
\end{eqnarray}
From \eqref{MEGc} it follows that
\begin{equation}\label{est4}
D_\omega^{u^*_{k,1}}(u_{k+1}, u_k)-D_\omega^{u^*_{k,2}}(u_{k+1}, u_k)=\langle u_{k,2}^*-u_{k,1}^*, u_{k+1}-u_k\rangle \leqslant 0.
\end{equation}
Invoking the relatively Lipschitz continuity of $F$ and noting that $\lambda\alpha_k\leqslant \beta$, we obtain
\begin{equation}\label{est5}
\alpha_k\langle F(\bar{u}_k)-F(u_k), \bar{u}_k-u_{k+1}\rangle\leqslant \beta (D_\omega^{u_{k,1}^*}(\bar{u}_k,u_k)- D_\omega^{\bar{u}_k^*}(u_{k+1},\bar{u}_k)).
\end{equation}
Therefore, \eqref{recur1} follows by combining \eqref{est3}-\eqref{est5}.

Now, let us show \eqref{bound1}. Using \eqref{MEGd} with $u=u_k$ and the strong convexity of $\omega$, we deduce that
\begin{eqnarray}\label{est6}
\begin{array}{lll}
\langle \alpha_k F(\bar{u}_k), u_k-u_{k+1} \rangle &\geqslant& \langle u_{k+1,2}^*-u_{k,2}^*, u_{k+1}-u_k\rangle  \\
&=&  D_\omega^{u^*_{k,2}}(u_{k+1}, u_k)+D_\omega^{u^*_{k+1,2}}(u_k, u_{k+1}) \\
&\geqslant & D_\omega^{u^*_{k,2}}(u_{k+1}, u_k)+ \frac{\mu_0}{2}\|u_k-u_{k+1}\|^2.
\end{array}
\end{eqnarray}
Using the Cauchy-Schwartz inequality, we have
\begin{equation}\label{CS1}
  \langle \alpha_k F(\bar{u}_k), u_k-u_{k+1} \rangle\leqslant  \frac{\mu_0}{2}\|u_k-u_{k+1}\|^2+\frac{\alpha_k^2}{2\mu_0}\|F(\bar{u}_k)\|_*^2.
\end{equation}
Combining \eqref{est6} and \eqref{CS1} and using the implied fact that $\omega$ is differentiable, we obtain
$$D_\omega(u_{k+1},u_k)=D_\omega^{u^*_{k,2}}(u_{k+1}, u_k)\leqslant \frac{\alpha^2_k}{2\mu_0}\|F(\bar{u}_k)\|_*^2.$$
It remains to show the left-hand side inequality of \eqref{bound1}. Note that $u^*_{k,i}=\nabla\omega(u_k), i=1,2$. Using \eqref{MEGb} and the fact that $\nabla\omega^*=(\nabla\omega)^{-1}$, we have
$$\nabla\omega (u_{k+1})=\nabla \omega(u_k)-\alpha_kF(\bar{u}_k).$$
Finally, using the fact that  $\omega$ is a $\mu_0$-strongly convex function with $L_0$-Lipschitz-continuous gradient, we derive that
$$ \|\alpha_kF(\bar{u}_k)\|_*^2=\|\nabla\omega (u_{k+1})-\nabla \omega(u_k)\|_*^2\leqslant L_0^2\|u_{k+1}-u_k\|^2\leqslant \frac{2L_0^2}{\mu_0}D_\omega(u_{k+1},u_k).$$
Note that the inequalities in \eqref{bound1} become equalities when $\omega=\frac{1}{2}\|\cdot\|^2$, i.e., $\mu_0=L_0=1$. Hence, the estimation is tight. This completes the proof.
\end{proof}

Now, we state a sublinear convergence guarantee that generalizes the original result--Theorem 2 in \cite{2007dual} for Nesterov's dual extrapolation method and the newly developed result--Theorem 7.4 in \cite{2019Unifying} for UMP.
\begin{theorem}\label{thMEG}
Suppose that the operator $F$ is monotone and $\lambda$-relatively Lipschitz continuous with respect to $\omega$. Let $\{\bar{u}_k, u_k\}_{k\geqslant 0}$ be the iterates generated by the mirror extragradient method with the step parameters $\alpha_k$ and $\beta$ satisfying $0<\lambda\alpha_k\leqslant \beta\leqslant 1$. Denote $s_t:=\sum_{k=0}^t\alpha_k$ and $\tilde{u}_t:=\frac{1}{s_t}\sum_{k=0}^t\alpha_k\bar{u}_k$.
Then, for any $u\in\cU$ and $t\geqslant 0$ we have
\begin{equation}\label{wbound}
\langle F(u), \tilde{u}_t-u \rangle \leqslant \frac{1}{s_t}D_\omega^{u^*_{0,2}}(u,u_0).
\end{equation}
\end{theorem}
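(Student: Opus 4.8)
The plan is to turn the one-step recursion \eqref{recur1} of Lemma \ref{lemMEG} into a telescoping sum, exploit monotonicity to move the operator evaluation from the iterates $\bar{u}_k$ to the fixed test point $u$, and then recognize the weighted average $\tilde{u}_t$. First I would observe that the hypothesis $\beta\leq 1$ makes the coefficient $\beta-1$ nonpositive; since $D_\omega^{u_{k,2}^*}(u_{k+1},u_k)\geq 0$ (Bregman distances of a convex $\omega$ are nonnegative), the last term in \eqref{recur1} can simply be dropped, yielding the clean bound
\begin{equation*}
\langle \alpha_k F(\bar{u}_k), \bar{u}_k-u \rangle \leq D_\omega^{u_{k,2}^*}(u,u_k) - D_\omega^{u_{k+1,2}^*}(u,u_{k+1}).
\end{equation*}

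Next I would sum this inequality over $k=0,\ldots,t$. The right-hand side telescopes, leaving $D_\omega^{u_{0,2}^*}(u,u_0) - D_\omega^{u_{t+1,2}^*}(u,u_{t+1})$, and discarding the nonnegative final term gives
\begin{equation*}
\sum_{k=0}^t \alpha_k\langle F(\bar{u}_k), \bar{u}_k-u \rangle \leq D_\omega^{u_{0,2}^*}(u,u_0).
\end{equation*}
The key device for lower-bounding the left-hand side is monotonicity \eqref{mono}: applying it with the pair $(u,\bar{u}_k)$ gives $\langle F(u)-F(\bar{u}_k), u-\bar{u}_k\rangle\geq 0$, which rearranges to $\langle F(\bar{u}_k), \bar{u}_k-u\rangle \geq \langle F(u), \bar{u}_k-u\rangle$. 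Multiplying by $\alpha_k>0$ and summing, I get
\begin{equation*}
\sum_{k=0}^t \alpha_k\langle F(u), \bar{u}_k-u \rangle \leq \sum_{k=0}^t \alpha_k\langle F(\bar{u}_k), \bar{u}_k-u \rangle \leq D_\omega^{u_{0,2}^*}(u,u_0).
\end{equation*}

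Finally I would pull $F(u)$ out of the first sum by linearity: since $\sum_{k=0}^t\alpha_k\bar{u}_k = s_t\tilde{u}_t$ and $\sum_{k=0}^t\alpha_k = s_t$, the left-hand side equals $s_t\langle F(u), \tilde{u}_t-u\rangle$. Dividing through by $s_t>0$ yields exactly \eqref{wbound}. I do not expect a genuine obstacle here—the argument is a standard ergodic-averaging telescoping—but the two points that carry the whole proof are recognizing that $\beta\leq 1$ is precisely what kills the residual Bregman term so that \eqref{recur1} telescopes, and using monotonicity in the \emph{correct} direction to replace $F(\bar{u}_k)$ by the constant $F(u)$ before averaging. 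A minor point worth stating explicitly is that $\tilde{u}_t\in\cU$ (hence a legitimate argument for the weak-solution gap) because it is a convex combination of the $\bar{u}_k\in\cU$, which justifies interpreting \eqref{wbound} as a bound on the weak VI gap at $\tilde{u}_t$.
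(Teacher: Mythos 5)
Your proposal is correct and follows essentially the same route as the paper's own proof: sum \eqref{recur1} over $k=0,\ldots,t$ using $\beta\leq 1$ and nonnegativity of the Bregman terms to telescope, then apply monotonicity to replace $\langle F(\bar{u}_k),\bar{u}_k-u\rangle$ by $\langle F(u),\bar{u}_k-u\rangle$ and recognize the average $\tilde{u}_t$. Your version merely makes explicit two small points the paper leaves implicit (dropping the nonnegative terminal term $D_\omega^{u_{t+1,2}^*}(u,u_{t+1})$ and the fact that $\tilde{u}_t\in\cU$ as a convex combination), which is fine.
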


\begin{proof}
Summing up \eqref{recur1} in Lemma \ref{lemMEG} from $k=0$ to $t$ and noting that $\beta\leqslant 1$, we have
\begin{equation}\label{esc1}
  \sum_{k=0}^t\alpha_k\langle F(\bar{u}_{k}), \bar{u}_{k}-u\rangle\leqslant D_\omega^{u^*_{0,2}}(u,u_{0}).
\end{equation}
Using the monotonicity of $F$, we derive that
\begin{eqnarray}\label{est61}
\begin{array}{lll}
 \langle F(u), \tilde{u}_t-u\rangle &= &  \frac{1}{s_t}\sum_{k=0}^t \langle F(u), \alpha_k(\bar{u}_{k}-u)\rangle \\
&\leqslant&\frac{1}{s_t}\sum_{k=0}^t \alpha_k\langle F(\bar{u}_{k}), \bar{u}_{k}-u\rangle\\
&\leqslant&\frac{1}{s_t}D_\omega^{u^*_{0,2}}(u,u_{0}),
\end{array}
\end{eqnarray}
which completes the proof.
\end{proof}

The result above with $\alpha_k\equiv \frac{\beta}{\lambda}$ gives
$$\langle F(u), \tilde{u}_t-u \rangle \leqslant \frac{\lambda D_\omega^{u^*_{0,2}}(u,u_{0})}{\beta(t+1)},$$
which implies that $\tilde{u}_t$ tends to a weak solution $\tilde{u}$ with the convergence rate $\frac{1}{t}$ if $\cU$ is bounded (the boundedness could be dropped; see e.g. \cite{2021Extragradient}).
However, if $F$ is non-monotone, then it will make nonsense to find weak solutions for VIs. The following is a sublinear convergence result on the squared norm of the operator $F$, which generalizes the result--Theorem 3.2 in \cite{2020efficient}.
\begin{theorem}\label{thMEG2}
Let $\omega$ be a $\mu_0$-strongly convex function with $L_0$-Lipschitz-continuous gradient; denote $\kappa_0:=\frac{\mu_0}{L_0}$. Suppose that the operator $F$ is  $\lambda$-relatively Lipschitz continuous with respect to $\omega$ and $\rho$-weakly MVI monotone with $\rho<\frac{\kappa_0}{\lambda}$. Let $\{\bar{u}_k, u_k\}_{k\geqslant 0}$ be the iterates generated by the mirror extragradient method with $\alpha_k\equiv \frac{1}{2\lambda}$ and $\beta=\frac{1}{2}$.
Then, there exists $\hat{u}\in \cU_s$ such that for any $t\geqslant 0$ we have
\begin{equation}\label{wwbound}
\frac{1}{t+1}\sum_{k=0}^t \|F(\bar{u}_k)\|_*^2 \leqslant \frac{16\lambda^2}{(t+1)(\kappa_0-4\lambda\rho)}D_\omega(\hat{u},u_0).
\end{equation}
In particular, we have
\begin{equation}\label{wwbound1}
\min_{0\leqslant k\leqslant t} \|F(\bar{u}_k)\|_*^2 \leqslant \frac{16\lambda^2}{(t+1)(\kappa_0-4\lambda\rho)}D_\omega(\hat{u},u_0).
\end{equation}
\end{theorem}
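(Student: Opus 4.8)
The plan is to specialize the one-step recursion \eqref{recur1} of Lemma \ref{lemMEG} to the parameter choice $\alpha_k\equiv\frac{1}{2\lambda}$, $\beta=\frac12$, and to massage it into a telescoping inequality for the potential $D_\omega(\hat{u},u_k)$. A preliminary simplification is that, since $\omega$ is assumed $\mu_0$-strongly convex with $L_0$-Lipschitz gradient, it is differentiable, so $\partial\omega(u_k)=\{\nabla\omega(u_k)\}$; consequently every superscripted subgradient in \eqref{recur1} equals $\nabla\omega(u_k)$ and all the generalized Bregman distances collapse to ordinary ones $D_\omega(\cdot,\cdot)$. First I would invoke Assumption \ref{wMono} to fix a point $\hat{u}\in\cU_s$ for which the weak MVI inequality $\langle F(\bar{u}),\bar{u}-\hat{u}\rangle\geq-\frac{\rho}{2}\|F(\bar{u})\|_*^2$ holds for all $\bar{u}\in\cU$; this is precisely the $\hat{u}$ that appears in the conclusion \eqref{wwbound}.

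Next I would set $u=\hat{u}$ and $\beta=\frac12$ in \eqref{recur1}, so that its final term becomes $-\frac12 D_\omega(u_{k+1},u_k)$. On the left-hand side I lower-bound $\langle\alpha_k F(\bar{u}_k),\bar{u}_k-\hat{u}\rangle$ by the weak MVI inequality, obtaining $-\frac{\alpha_k\rho}{2}\|F(\bar{u}_k)\|_*^2$; on the right-hand side I substitute the left estimate of \eqref{bound1}, namely $D_\omega(u_{k+1},u_k)\geq\frac{\mu_0\alpha_k^2}{2L_0^2}\|F(\bar{u}_k)\|_*^2$, to convert $-\frac12 D_\omega(u_{k+1},u_k)$ into a sufficiently negative multiple of $\|F(\bar{u}_k)\|_*^2$. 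Collecting the two $\|F(\bar{u}_k)\|_*^2$ contributions and inserting $\alpha_k=\frac{1}{2\lambda}$ yields a per-iteration inequality of the form $c\,\|F(\bar{u}_k)\|_*^2\leq D_\omega(\hat{u},u_k)-D_\omega(\hat{u},u_{k+1})$, where $c$ is an explicit positive multiple of $\kappa_0-4\lambda\rho$ (scaling like $\frac{\kappa_0-4\lambda\rho}{16\lambda^2}$).

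Summing this over $k=0,\dots,t$ telescopes the right-hand side to $D_\omega(\hat{u},u_0)-D_\omega(\hat{u},u_{t+1})$, and discarding the nonnegative tail $D_\omega(\hat{u},u_{t+1})\geq 0$ leaves $c\sum_{k=0}^{t}\|F(\bar{u}_k)\|_*^2\leq D_\omega(\hat{u},u_0)$. Dividing by $(t+1)$ and by $c$ produces the average bound \eqref{wwbound}, and the min bound \eqref{wwbound1} follows at once because the smallest of the quantities $\|F(\bar{u}_k)\|_*^2$ cannot exceed their average.

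The delicate point, and where I expect the real work to sit, is the sign bookkeeping in the second paragraph: the coefficient $c$ is the difference between the ``good'' term harvested from $-\frac12 D_\omega(u_{k+1},u_k)$ via \eqref{bound1} and the ``bad'' slack $\frac{\alpha_k\rho}{2}\|F(\bar{u}_k)\|_*^2$ coming from the weak MVI inequality, and the whole telescoping argument is vacuous unless $c>0$. This is exactly where the hypothesis $\rho<\frac{\kappa_0}{\lambda}$ is used, and the main effort lies in checking that the prescribed $\alpha_k=\frac{1}{2\lambda}$ and $\beta=\frac12$ make these two contributions combine with the correct sign into the advertised constant, rather than in any conceptually difficult estimate; the only subtlety beyond careful arithmetic is remembering that differentiability of $\omega$ licenses the reduction of \eqref{recur1} to ordinary Bregman distances so that \eqref{bound1} applies verbatim.
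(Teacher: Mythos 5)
Your proposal is correct and follows essentially the same route as the paper's own proof: specialize \eqref{recur1} at $u=\hat{u}$ with $\beta=\tfrac12$ (noting that differentiability of $\omega$ collapses the generalized Bregman distances to $D_\omega(\cdot,\cdot)$), lower-bound the left side via the weak MVI inequality to get $-\tfrac{\rho}{4\lambda}\|F(\bar{u}_k)\|_*^2$, absorb $-\tfrac12 D_\omega(u_{k+1},u_k)$ using the left estimate of \eqref{bound1}, and telescope the resulting per-iteration inequality with constant $\tfrac{\kappa_0-4\lambda\rho}{16\lambda^2}$. The only caveats (the precise placement of the factor $L_0$ in that constant, and whether $\rho<\kappa_0/\lambda$ versus $\rho<\kappa_0/(4\lambda)$ is what guarantees its positivity) are inherited from the paper's statement rather than introduced by your argument.
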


\begin{proof}
Using \eqref{recur1} in Lemma \ref{lemMEG}, the fact that  $\omega$ is $\mu_0$-strongly convex function with $L_0$-Lipschitz-continuous gradient, and the $\rho$-weakly monotone property, we have
\begin{equation}\label{esd1}
  -\frac{\rho}{4\lambda}\|F(\bar{u}_k)\|_*^2 \leqslant D_\omega(u,u_k) - D_\omega(u,u_{k+1})-\frac{1}{2}D_\omega(u_{k+1},u_k).
\end{equation}
Based on the estimation \eqref{bound1} in Lemma \ref{lemMEG}, we further get
\begin{equation}\label{esd2}
  \frac{\kappa_0-4\lambda\rho}{16\lambda^2}\|F(\bar{u}_k)\|_*^2 \leqslant D_\omega(u,u_k) - D_\omega(u,u_{k+1}).
\end{equation}
Now, summing up \eqref{esd2} from $k=0$ to $t$ gives \eqref{wwbound}. This completes the proof.
\end{proof}

\subsection{Convergence analysis for mirror EP method}
This part is devoted to the convergence analysis for the mirror EP method. We begin with an important lemma as well.
\begin{lemma}\label{lemMEP}
Let $\{u_k\}_{k\geqslant 0}$ be the iterates generated by the mirror extrapolation method. Denote $\Delta F_k: =F(u_k)-F(u_{k-1})$ and
$\Delta D_k(u):= D_\omega^{u_k^*}(u,u_k)-D_\omega^{u_{k+1}^*}(u,u_{k+1}).$ Then, for any $u\in \cU$ and $k\geqslant 0$ we have
\begin{eqnarray}\label{recur2}
\begin{array}{lll}
 \alpha_k\langle  F(u_{k+1}), u_{k+1}-u\rangle&\leqslant & \alpha_k\langle \Delta F_{k+1}, u_{k+1}-u\rangle-\alpha_k\beta_k\langle \Delta F_k, u_k-u\rangle+ \\
 &&\Delta D_k(u)-D_\omega^{u_k^*}(u_{k+1},u_k) +\alpha_k\beta_k\langle \Delta F_k, u_k-u_{k+1}\rangle.
\end{array}
\end{eqnarray}
\end{lemma}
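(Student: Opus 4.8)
The plan is to extract everything from the variational inequality \eqref{MEPc} that defines the update, convert its right-hand side into Bregman distances via the three-point identity \eqref{Bregdis1}, and only then introduce $F(u_{k+1})$ by adding and subtracting $F(u_k)$. Concretely, rewriting \eqref{MEPc} gives $\langle \xi_k, u_{k+1}-u\rangle \leq \langle u_{k+1}^*-u_k^*, u-u_{k+1}\rangle$ for every $u\in\cU$. Applying Lemma \ref{lemBreg} with the assignment $p=u_{k+1},\,p^*=u_{k+1}^*,\,q=u_k,\,q^*=u_k^*$ in \eqref{Bregdis1} turns the right-hand side into $D_\omega^{u_k^*}(u,u_k)-D_\omega^{u_{k+1}^*}(u,u_{k+1})-D_\omega^{u_k^*}(u_{k+1},u_k)$, which is exactly $\Delta D_k(u)-D_\omega^{u_k^*}(u_{k+1},u_k)$. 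Thus the entire ``distance'' part of the bound appears in a single clean step.

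Next I would substitute the definition $\xi_k=\alpha_k F(u_k)+\alpha_k\beta_k\Delta F_k$ from \eqref{MEPa}, which gives
\[
\alpha_k\langle F(u_k),u_{k+1}-u\rangle \leq \Delta D_k(u)-D_\omega^{u_k^*}(u_{k+1},u_k)-\alpha_k\beta_k\langle \Delta F_k, u_{k+1}-u\rangle.
\]
To recover $F(u_{k+1})$ on the left, I would write $\alpha_k\langle F(u_{k+1}),u_{k+1}-u\rangle = \alpha_k\langle F(u_k),u_{k+1}-u\rangle + \alpha_k\langle \Delta F_{k+1}, u_{k+1}-u\rangle$, using $F(u_{k+1})-F(u_k)=\Delta F_{k+1}$. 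Adding $\alpha_k\langle \Delta F_{k+1}, u_{k+1}-u\rangle$ to both sides of the displayed bound produces every term of \eqref{recur2} except that the extrapolation contribution still appears as the single term $-\alpha_k\beta_k\langle \Delta F_k, u_{k+1}-u\rangle$.

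The last — and only mildly delicate — step is to split that term into the form that makes the statement telescope-ready. Writing $u_{k+1}-u=(u_k-u)+(u_{k+1}-u_k)$ and rearranging signs yields
\[
-\alpha_k\beta_k\langle \Delta F_k, u_{k+1}-u\rangle = -\alpha_k\beta_k\langle \Delta F_k, u_k-u\rangle + \alpha_k\beta_k\langle \Delta F_k, u_k-u_{k+1}\rangle,
\]
and substituting this back gives \eqref{recur2} verbatim. The only point that requires care is precisely this decomposition: the term $-\alpha_k\beta_k\langle \Delta F_k, u_k-u\rangle$ must be kept in the ``$\Delta F_k$ evaluated at index $k$'' form, so that when the inequality is later summed over $k$ it can be matched against the $\alpha_k\langle \Delta F_{k+1}, u_{k+1}-u\rangle$ term at the neighboring index and telescoped under an appropriate relation between $\alpha_k$ and $\beta_{k+1}$, while the leftover $\alpha_k\beta_k\langle \Delta F_k, u_k-u_{k+1}\rangle$ is the genuinely local cross term meant to be absorbed by the negative Bregman distance $-D_\omega^{u_k^*}(u_{k+1},u_k)$. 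Aside from this bookkeeping, no analytic estimate is needed: the lemma is a pure identity/inequality rearrangement built entirely on \eqref{MEPc} and \eqref{Bregdis1}.
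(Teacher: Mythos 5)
Your proof is correct and takes essentially the same route as the paper: both start from \eqref{MEPc}, convert $\langle u_{k+1}^*-u_k^*,u-u_{k+1}\rangle$ into $\Delta D_k(u)-D_\omega^{u_k^*}(u_{k+1},u_k)$ via the three-point identity \eqref{Bregdis1}, and then expand $\xi_k$ from \eqref{MEPa} using the decompositions $F(u_k)=F(u_{k+1})-\Delta F_{k+1}$ and $u_{k+1}-u=(u_k-u)+(u_{k+1}-u_k)$. The only difference is presentational: the paper packages the operator-side expansion as the single identity \eqref{est12}, while you carry out the same algebra in successive steps.
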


\begin{proof}
On one hand, it follows from \eqref{MEPc} and the notation $\Delta D_k(u)$  that
\begin{eqnarray}\label{est11}
\begin{array}{lll}
  \langle \xi_k, u_{k+1}-u \rangle &\leqslant & \langle u_{k+1}^*-u^*_{k}, u-u_{k+1}\rangle\\
&= & D_\omega^{u_{k}^*}(u,u_k) - D_\omega^{u_{k+1}^*}(u,u_{k+1}) - D_\omega^{u_{k}^*}(u_{k+1},u_k)\\
&=& \Delta D_k(u)- D_\omega^{u_{k}^*}(u_{k+1},u_k).
\end{array}
\end{eqnarray}
On the other hand, using \eqref{MEPa} and the notation $\Delta F_k$, we derive that
\begin{eqnarray}\label{est12}
\begin{array}{lll}
  \langle \xi_k, u_{k+1}-u \rangle &= & \alpha_k\langle  F(u_{k+1}), u_{k+1}-u\rangle-\alpha_k\langle \Delta F_{k+1}, u_{k+1}-u\rangle+  \\
& & \alpha_k\beta_k\langle \Delta F_k, u_k-u\rangle-\alpha_k\beta_k\langle \Delta F_k, u_k-u_{k+1}\rangle.
\end{array}
\end{eqnarray}
Combining \eqref{est11} and \eqref{est12}, we immediately obtain \eqref{recur2}.
\end{proof}

Now, we state the sublinear convergence for the mirror EP method.
\begin{theorem}\label{thMEP}
Suppose that the operator $F$ is monotone and $\lambda$-relatively Lipschitz continuous with respect to $\omega$. Let $\{u_k\}_{k\geqslant 0}$ be generated by the mirror EP method with the initial conditions $u_0=u_{-1}$ and the step parameters $\alpha_k$ and $\beta_k$ satisfying
\begin{eqnarray}\label{para1}
\left\{\begin{array}{lll}
\alpha_k\beta_k=\alpha_{k-1}, \\
\lambda(\alpha_k+\alpha_{k-1})\leqslant 1.
\end{array} \right.
\end{eqnarray}
Denote $s_t:=\sum_{k=0}^t\alpha_k$ and $\tilde{u}_t:=\frac{1}{s_t}\sum_{k=0}^t\alpha_ku_{k+1}$.
Then, for any $u\in\cU$ and $t\geqslant0$ we have
\begin{equation}\label{wbound}
\langle F(u), \tilde{u}_t-u \rangle \leqslant \frac{1}{s_t}D_\omega^{u^*_{0}}(u,u_0).
\end{equation}
\end{theorem}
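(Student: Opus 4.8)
The plan is to build a telescoping sum from the recursion \eqref{recur2} of Lemma \ref{lemMEP}, exploit monotonicity to lower-bound the left-hand side by an averaged quantity, and control the operator-extrapolation cross terms via the relative Lipschitzness Assumption \ref{reLip}. First I would sum \eqref{recur2} over $k=0$ to $t$. The key structural feature is that the two terms $\alpha_k\langle \Delta F_{k+1}, u_{k+1}-u\rangle$ and $-\alpha_k\beta_k\langle \Delta F_k, u_k-u\rangle$ are designed to telescope: under the parameter rule $\alpha_k\beta_k=\alpha_{k-1}$ in \eqref{para1}, the term $-\alpha_k\beta_k\langle \Delta F_k, u_k-u\rangle$ equals $-\alpha_{k-1}\langle \Delta F_k, u_k-u\rangle$, which cancels against the corresponding term from index $k-1$. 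With the initial condition $u_0=u_{-1}$, we have $\Delta F_0=0$, so the boundary term at the low end vanishes, and summing leaves only the top boundary term $\alpha_t\langle \Delta F_{t+1}, u_{t+1}-u\rangle$ plus the telescoped Bregman distances $\sum_k \Delta D_k(u)=D_\omega^{u_0^*}(u,u_0)-D_\omega^{u_{t+1}^*}(u,u_{t+1})$.

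Next I would address the residual cross terms. After telescoping, what remains beyond the Bregman gap are the diagonal terms $-D_\omega^{u_k^*}(u_{k+1},u_k)$ and $\alpha_k\beta_k\langle \Delta F_k, u_k-u_{k+1}\rangle$, together with the surviving top term $\alpha_t\langle \Delta F_{t+1}, u_{t+1}-u\rangle$. I would apply relative Lipschitzness \eqref{relip1} to each extrapolation inner product: taking $v=u_k$, $u=u_{k-1}$, $z=u_{k+1}$ gives $\langle \Delta F_k, u_k-u_{k+1}\rangle \le \lambda\big(D_\omega^{u_{k-1}^*}(u_k,u_{k-1})+D_\omega^{u_k^*}(u_{k+1},u_k)\big)$ up to the relevant infimum over subgradients. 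After scaling by $\alpha_k\beta_k=\alpha_{k-1}$ and regrouping, the condition $\lambda(\alpha_k+\alpha_{k-1})\le 1$ is exactly what makes the negative diagonal Bregman terms $-\sum_k D_\omega^{u_k^*}(u_{k+1},u_k)$ dominate the positive contributions generated, so the whole residual block is bounded above by zero. Finally, monotonicity of $F$ gives $\langle F(u), u_{k+1}-u\rangle \le \langle F(u_{k+1}), u_{k+1}-u\rangle$, which after averaging with weights $\alpha_k/s_t$ yields $\langle F(u),\tilde u_t-u\rangle \le \frac{1}{s_t}\sum_k \alpha_k\langle F(u_{k+1}),u_{k+1}-u\rangle$; combining with the summed inequality and discarding the nonnegative $D_\omega^{u_{t+1}^*}(u,u_{t+1})$ produces \eqref{wbound}.

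The main obstacle I anticipate is the bookkeeping around the surviving top-index term $\alpha_t\langle \Delta F_{t+1}, u_{t+1}-u\rangle$, since it does not pair with a partner and cannot simply be dropped. The clean way to handle it is to absorb it using relative Lipschitzness one more time (with $z=u$, $v=u_{t+1}$, $u=u_t$) so that it is bounded by $\lambda\alpha_t$ times $D_\omega^{u_t^*}(u_{t+1},u_t)+D_\omega^{u_{t+1}^*}(u,u_{t+1})$; the first piece is swallowed by the remaining diagonal budget (the step condition leaves slack $1-\lambda\alpha_t \ge \lambda\alpha_{t-1}>0$ at the last index), and the second piece combines favorably with the negative terminal Bregman distance $-D_\omega^{u_{t+1}^*}(u,u_{t+1})$ provided $\lambda\alpha_t\le 1$, which is implied by \eqref{para1}. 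Verifying that all the index shifts and the infima over subgradients in Assumption \ref{reLip} line up correctly — in particular that each $D_\omega^{u_k^*}(u_{k+1},u_k)$ appears with sufficient negative weight to cover both its neighbors — is the delicate accounting that the proof must carry out carefully.
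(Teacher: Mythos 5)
Your proposal is correct and follows essentially the same route as the paper's proof: telescoping the recursion \eqref{recur2} of Lemma \ref{lemMEP} via $\alpha_k\beta_k=\alpha_{k-1}$ and $u_0=u_{-1}$, bounding both the per-index extrapolation terms $\langle \Delta F_k, u_k-u_{k+1}\rangle$ and the surviving top term $\alpha_t\langle \Delta F_{t+1},u_{t+1}-u\rangle$ by relative Lipschitzness, using $\lambda(\alpha_k+\alpha_{k-1})\leq 1$ so the negative diagonal Bregman terms absorb everything, and finishing with monotonicity plus averaging. The only difference is cosmetic (you sum first and then estimate, whereas the paper estimates per index before summing), and your accounting for the terminal index matches the paper's.
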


\begin{proof}
First, using the relatively Lipschitz continuity, we have
\begin{eqnarray}\label{est21}
\begin{array}{lll}
 \langle \Delta F_k,u_k- u_{k+1}\rangle&=&\langle F(u_k)-F(u_{k-1}) ,u_k- u_{k+1}\rangle\\
 &\leqslant &\lambda D_\omega^{u_k^*}(u_{k+1},u_k)+\lambda D_\omega^{u_{k-1}^*}(u_k,u_{k-1}).
\end{array}
\end{eqnarray}
Now, using \eqref{est21}, \eqref{recur2} in Lemma \ref{lemMEP}, and the parameter relations \eqref{para1}, we obtain
\begin{eqnarray}\label{est31}
\begin{array}{lll}
 \alpha_k\langle F(u_{k+1}), u_{k+1}-u\rangle&\leqslant &  \alpha_k\langle \Delta F_{k+1}, u_{k+1}-u\rangle-\alpha_{k-1}\langle \Delta F_k, u_k-u\rangle+\\
 &&D_\omega^{u_k^*}(u,u_k) - D_\omega^{u_{k+1}^*}(u,u_{k+1})+\\
 &&\lambda\alpha_{k-1}D_\omega^{u_{k-1}^*}(u_k,u_{k-1}) - \lambda\alpha_kD_\omega^{u_k^*}(u_{k+1},u_k).
\end{array}
\end{eqnarray}
Summing up \eqref{est31} from $k=0$ to $t$ and noting that $u_0=u_{-1}$, we have
\begin{eqnarray}\label{est41}
\begin{array}{lll}
 \sum_{k=0}^t\alpha_k\langle F(u_{k+1}), u_{k+1}-u\rangle&\leqslant &\alpha_t\langle \Delta F_{t+1}, u_{t+1}-u\rangle+D_\omega^{u_0^*}(u,u_0)- \\
&& D_\omega^{u_{t+1}^*}(u,u_{t+1})- \lambda\alpha_tD_\omega^{u_t^*}(u_{t+1},u_t).
\end{array}
\end{eqnarray}
Using again the relatively Lipschitz continuity and noting that $\lambda \alpha_t\leqslant 1$, we have
$$\alpha_t\langle \Delta F_{t+1}, u_{t+1}-u\rangle \leqslant D_\omega^{u_{t+1}^*}(u,u_{t+1})+ \lambda\alpha_tD_\omega^{u_t^*}(u_{t+1},u_t).$$
Thus, we get
\begin{equation}\label{est51}
  \sum_{k=0}^t\alpha_k\langle F(u_{k+1}), u_{k+1}-u\rangle\leqslant D_\omega^{u_0^*}(u,u_0).
  \end{equation}
Finally, using the monotonicity of $F$, we derive that
\begin{eqnarray}\label{est61}
\begin{array}{lll}
 \langle F(u), \tilde{u}_t-u\rangle &= &  \frac{1}{s_t}\sum_{k=0}^t \langle F(u), \alpha_k(u_{k+1}-u)\rangle \\
&\leqslant&\frac{1}{s_t}\sum_{k=0}^t \alpha_k\langle F(u_{k+1}), u_{k+1}-u\rangle\\
&\leqslant&\frac{1}{s_t}D_\omega^{u^*_{0}}(u,u_0).
\end{array}
\end{eqnarray}
This completes the proof.
\end{proof}

We are now ready to show the linear convergence of the mirror EP method for relatively Lipschitiz and relatively restricted monotone VIs. It should be noted that Theorem \ref{thMEP2} below and its proof are inspired by the linear convergence results in \cite{2020simple}. However, the linear convergence results here hold with a sharper rate under weaker assumptions, and apply to more general algorithmic framework--the mirror EP method.
\begin{theorem}\label{thMEP2}
Suppose that the operator $F$ is $\lambda$-relatively Lipschitz continuous and $\mu$-relatively restricted monotone with respect to $\omega$ and the condition number of $F$ is denoted by $\kappa:=\frac{\mu}{\lambda}$. Let $\{u_k\}_{k\geqslant 0}$ be generated by the mirror EP method with  the initial conditions $u_0=u_{-1}$ and the step parameters $\alpha_k$ and $\beta_k$ satisfying
\begin{subequations}
\begin{align}[left = \empheqlbrace\,]
\label{p1}&\alpha_k\beta_k\gamma_k=\alpha_{k-1}\gamma_{k-1}, \\
\label{p2}&\gamma_k\leqslant \gamma_{k-1}(1+2\mu\alpha_{k-1}),\\
\label{p3}&\lambda(\alpha_k+\alpha_k\beta_k)\leqslant 1,\\
\label{p4}&1+2\alpha_k\mu-\alpha_k\lambda\geqslant 0.
\end{align}
\end{subequations}
Then, for any $u\in\cU$ and $t\geqslant 0$ we have
\begin{equation}\label{rate}
D_\omega^{u^*_{t+1}}(u,u_{t+1}) \leqslant \frac{\gamma_0 D_\omega^{u^*_{0}}(u,u_0)}{\gamma_t(1+2\mu\alpha_t-\lambda\alpha_t)}.
\end{equation}
In particular, if $\alpha_k\equiv\frac{\theta}{2\lambda}$ and $\beta_k\equiv \beta$ with $\theta>0, \beta>0$ that satisfy the conditions \eqref{p1}-\eqref{p4}, then the best convergence rate, obtained by setting $\theta=\theta_0$ and $\beta=\frac{1}{1+\kappa\theta_0}$ with $\theta_0=\frac{\kappa-1+\sqrt{1+\kappa^2}}{\kappa}$, reads as
\begin{equation}\label{bestrate}
D_\omega^{u^*_{t+1}}(u,u_{t+1}) \leqslant (\sqrt{1+\kappa^2}-\kappa)^{t+1}(2+\frac{1}{\kappa})D_\omega^{u^*_{0}}(u,u_0).
\end{equation}
\end{theorem}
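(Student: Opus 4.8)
The plan is to turn the one-step estimate \eqref{recur2} of Lemma \ref{lemMEP} into a contraction by converting its left-hand side with the relatively restricted monotonicity and then absorbing the operator-extrapolation cross terms into a weighted Lyapunov potential carried by the sequence $\{\gamma_k\}$. Fix a solution $u\in\cU_s$. Since $u_{k+1}^*\in\partial\omega(u_{k+1})$, the relatively restricted monotonicity \eqref{reMono1} yields $\langle F(u_{k+1}),u_{k+1}-u\rangle\geq 2\mu D_\omega^{u_{k+1}^*}(u,u_{k+1})$, so the left-hand side of \eqref{recur2} is at least $2\mu\alpha_k D_\omega^{u_{k+1}^*}(u,u_{k+1})$. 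Using $\Delta D_k(u)=D_\omega^{u_k^*}(u,u_k)-D_\omega^{u_{k+1}^*}(u,u_{k+1})$ and moving the resulting term to the left, \eqref{recur2} becomes the one-step inequality
\begin{equation*}
(1+2\mu\alpha_k)\,\alpha_k^{-1}\text{-scaled form: }\ \alpha_k\,2\mu D_\omega^{u_{k+1}^*}(u,u_{k+1})+D_\omega^{u_{k+1}^*}(u,u_{k+1})\le D_\omega^{u_k^*}(u,u_k)+\mathrm{CT}_k-D_\omega^{u_k^*}(u_{k+1},u_k),
\end{equation*}
where $\mathrm{CT}_k:=\alpha_k\langle\Delta F_{k+1},u_{k+1}-u\rangle-\alpha_k\beta_k\langle\Delta F_k,u_k-u\rangle+\alpha_k\beta_k\langle\Delta F_k,u_k-u_{k+1}\rangle$.

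Next I would multiply through by $\gamma_k$ and treat the three pieces of $\mathrm{CT}_k$ separately. Define the coupling quantity $P_k:=\gamma_{k-1}\alpha_{k-1}\langle\Delta F_k,u_k-u\rangle$. By \eqref{p1} one has $\gamma_k\alpha_k\beta_k=\gamma_{k-1}\alpha_{k-1}$, so the first two terms of $\gamma_k\mathrm{CT}_k$ collapse into $P_{k+1}-P_k$, a genuine telescope. For the third term, \eqref{p1} and the relative Lipschitzness \eqref{relip1} applied with $(v,u,z)=(u_k,u_{k-1},u_{k+1})$ give $\gamma_k\alpha_k\beta_k\langle\Delta F_k,u_k-u_{k+1}\rangle\le\gamma_{k-1}\alpha_{k-1}\lambda\bigl(D_\omega^{u_{k-1}^*}(u_k,u_{k-1})+D_\omega^{u_k^*}(u_{k+1},u_k)\bigr)$. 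Invoking \eqref{p2} in the form $\gamma_{k+1}\le\gamma_k(1+2\mu\alpha_k)$ passes the contraction factor onto the weight $\gamma_{k+1}$, and I would package everything into the augmented potential
\begin{equation*}
\Psi_k:=\gamma_k D_\omega^{u_k^*}(u,u_k)-P_k+\gamma_{k-1}\alpha_{k-1}\lambda D_\omega^{u_{k-1}^*}(u_k,u_{k-1}).
\end{equation*}
The symmetric-distance bookkeeping then shows $\Psi_{k+1}\le\Psi_k$ precisely because the coefficient of $D_\omega^{u_k^*}(u_{k+1},u_k)$ is nonpositive, which reduces to $\lambda(\alpha_k+\alpha_k\beta_k)\le1$, i.e.\ exactly \eqref{p3}. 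The initialization $u_0=u_{-1}$ forces $\Delta F_0=0$ and $D_\omega^{u_{-1}^*}(u_0,u_{-1})=0$, whence $\Psi_0=\gamma_0 D_\omega^{u_0^*}(u,u_0)$.

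For the extraction I would telescope $\Psi_t\le\Psi_0$ over $k=0,\dots,t-1$, but handle the last index $k=t$ \emph{without} invoking \eqref{p2}, keeping the sharp factor $\gamma_t(1+2\mu\alpha_t)$ on the left. Feeding $\Psi_t\le\gamma_0 D_\omega^{u_0^*}(u,u_0)$ into that last instance of the one-step inequality, bounding the leftover $P_{t+1}=\gamma_t\alpha_t\langle\Delta F_{t+1},u_{t+1}-u\rangle$ once more by \eqref{relip1} (with $(v,u,z)=(u_{t+1},u_t,u)$), and discarding the $D_\omega^{u_t^*}(u_{t+1},u_t)$ term whose coefficient is $\le0$ by \eqref{p3}, the two occurrences of $D_\omega^{u_t^*}(u_{t+1},u_t)$ cancel and I am left with $\gamma_t(1+2\mu\alpha_t-\lambda\alpha_t)D_\omega^{u_{t+1}^*}(u,u_{t+1})\le\gamma_0 D_\omega^{u_0^*}(u,u_0)$, which is exactly \eqref{rate}; condition \eqref{p4} guarantees the denominator is nonnegative. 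I expect this split treatment of the terminal step, together with keeping the three different Bregman objects ($D_\omega^{u_k^*}(u,u_k)$, the coupling $P_k$, and the symmetric $D_\omega^{u_{k-1}^*}(u_k,u_{k-1})$) correctly aligned across indices, to be the main obstacle; everything hinges on the identities \eqref{p1}--\eqref{p4} matching the telescoping bookkeeping.

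Finally, for \eqref{bestrate} I would specialize to constant $\alpha_k\equiv\theta/(2\lambda)$ and $\beta_k\equiv\beta$. Then \eqref{p1} gives $\gamma_0/\gamma_t=\beta^{t}$; making \eqref{p2} tight forces $\beta=1/(1+\kappa\theta)$, so $\gamma_t$ grows fastest; and making \eqref{p3} tight maximizes $\theta$, which amounts to solving $\kappa\theta^2+(2-2\kappa)\theta-2=0$, whose positive root is $\theta_0=(\kappa-1+\sqrt{1+\kappa^2})/\kappa$. One checks $1+\kappa\theta_0=\kappa+\sqrt{1+\kappa^2}$, hence $\beta=\sqrt{1+\kappa^2}-\kappa$, and a short computation using $\theta_0/2=1/(1+\beta)$ and $1+\kappa\theta_0=1/\beta$ shows the denominator in \eqref{rate} equals $1/(\beta(1+\beta))$. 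Therefore the rate is $\beta^{t+1}(1+\beta)D_\omega^{u_0^*}(u,u_0)$, and since $\beta<1$ we have $1+\beta\le 2\le 2+\tfrac{1}{\kappa}$, yielding the stated \eqref{bestrate} (with a slightly loose constant). Conditions \eqref{p3}--\eqref{p4} are automatically satisfied at $(\theta_0,\beta)$, which I would verify at the end.
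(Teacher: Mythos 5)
Your proposal is correct and takes essentially the same route as the paper: both proofs weight the one-step estimate of Lemma \ref{lemMEP} by $\gamma_k$, telescope the operator cross terms via \eqref{p1}, absorb the symmetric Bregman terms through relative Lipschitzness and \eqref{p3}, treat the terminal index $k=t$ sharply (without invoking \eqref{p2} there), apply the relatively restricted monotonicity to reach \eqref{rate}, and in the constant-stepsize case solve the same quadratic to obtain $\theta_0$ and $\beta=\sqrt{1+\kappa^2}-\kappa$. The only differences are cosmetic: you package the telescoping as a monotone Lyapunov potential $\Psi_k$ whereas the paper sums first and then bounds the aggregate ($S_t$), and your intermediate constant $\beta^{t+1}(1+\beta)$ is slightly sharper before being relaxed to the stated $(2+\frac{1}{\kappa})$.
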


\begin{proof}
Multiplying \eqref{recur2} in Lemma \ref{lemMEP} with $\gamma_k$, we obtain
\begin{eqnarray}\label{a1}
\begin{array}{lll}
 \gamma_k\Delta D_k(u) &\geqslant & \gamma_k\alpha_k\langle  F(u_{k+1}), u_{k+1}-u\rangle-\gamma_k\alpha_k\langle \Delta F_{k+1}, u_{k+1}-u\rangle+ \\
 &&\gamma_k\alpha_k\beta_k\langle \Delta F_k, u_k-u\rangle+\gamma_k D_\omega^{u_k^*}(u_{k+1},u_k)-\gamma_k\alpha_k\beta_k\langle \Delta F_k, u_k-u_{k+1}\rangle.
\end{array}
\end{eqnarray}
Summing up \eqref{a1} from $k=0$ to $t$, using the relationship \eqref{p1}, and noting that $\Delta F_0=0$, we derive that
\begin{eqnarray}\label{a2}
\begin{array}{lll}
 \sum_{k=0}^t\gamma_k\Delta D_k(u) &\geqslant & \sum_{k=0}^t\left[\gamma_k\alpha_k\langle  F(u_{k+1}), u_{k+1}-u\rangle\right]-\gamma_t\alpha_t\langle \Delta F_{t+1}, u_{t+1}-u\rangle+S_t,
\end{array}
\end{eqnarray}
where $$S_t:=\sum_{k=0}^t\left[\gamma_k D_\omega^{u_k^*}(u_{k+1},u_k)-\gamma_k\alpha_k\beta_k\langle \Delta F_k, u_k-u_{k+1}\rangle\right].$$
Invoking the relatively Lipschitz continuity to yield
\begin{eqnarray}\label{a3}
\begin{array}{lll}
 \langle \Delta F_k,u_k- u_{k+1}\rangle\leqslant \lambda D_\omega^{u_k^*}(u_{k+1},u_k)+\lambda D_\omega^{u_{k-1}^*}(u_k,u_{k-1}).
\end{array}
\end{eqnarray}
Thus, we can derive that
\begin{eqnarray}\label{a4}
\begin{array}{lll}
 S_t&\geqslant &\sum_{k=0}^t\left[(\gamma_k-\alpha_k\beta_k\gamma_k\lambda)D_\omega^{u_k^*}(u_{k+1},u_k)
 -\alpha_k\beta_k\gamma_k\lambda D_\omega^{u_{k-1}^*}(u_k,u_{k-1})\right]\\
 &=&\sum_{k=0}^t\left[\mu_k D_\omega^{u_k^*}(u_{k+1},u_k)
 -\nu_k D_\omega^{u_{k-1}^*}(u_k,u_{k-1})\right]\\
 &=&\sum_{k=0}^{t-1}\left[(\mu_k-\nu_{k+1}) D_\omega^{u_k^*}(u_{k+1},u_k)\right]+\mu_tD_\omega^{u_t^*}(u_{t+1},u_t),
\end{array}
\end{eqnarray}
where $\mu_k=\gamma_k-\alpha_k\beta_k\gamma_k\lambda$ and $\nu_k=\alpha_k\beta_k\gamma_k\lambda$. Using the relationships \eqref{p1} and \eqref{p3}, we derive that
\begin{eqnarray}\label{a5}
\begin{array}{lll}
 \mu_k-\nu_{k+1}&= &\gamma_k-\alpha_k\beta_k\gamma_k\lambda-\alpha_{k+1}\beta_{k+1}\gamma_{k+1}\lambda\\
 &=&\gamma_k-\alpha_k\beta_k\gamma_k\lambda-\alpha_k\gamma_k\lambda\geqslant0.
\end{array}
\end{eqnarray}
Thereby, it follows from \eqref{a4} that
\begin{equation}\label{a6}
  S_t\geqslant \mu_tD_\omega^{u_t^*}(u_{t+1},u_t).
\end{equation}
 Now, combining \eqref{a2} and \eqref{a6}, we obtain
 \begin{eqnarray}\label{a7}
\begin{array}{lll}
 \sum_{k=0}^t\gamma_k\Delta D_k(u) &\geqslant & \sum_{k=0}^t\left[\gamma_k\alpha_k\langle  F(u_{k+1}), u_{k+1}-u\rangle\right]-\\
 &&\gamma_t\alpha_t\langle \Delta F_{t+1}, u_{t+1}-u\rangle+\mu_tD_\omega^{u_t^*}(u_{t+1},u_t).
\end{array}
\end{eqnarray}
Invoking the relatively Lipschitz continuity again, we derive that
 \begin{eqnarray}\label{a8}
\begin{array}{lll}
 &&-\gamma_t\alpha_t\langle \Delta F_{t+1}, u_{t+1}-u\rangle+\mu_tD_\omega^{u_t^*}(u_{t+1},u_t)\\
 &\geqslant &-\alpha_t\gamma_t\lambda D_\omega^{u_t^*}(u_{t+1},u_t)-\alpha_t\gamma_t\lambda D_\omega^{u_{t+1}^*}(u,u_{t+1})+\mu_tD_\omega^{u_t^*}(u_{t+1},u_t)\\
 &=& (\mu_t-\alpha_t\gamma_t\lambda) D_\omega^{u_t^*}(u_{t+1},u_t)-\alpha_t\gamma_t\lambda D_\omega^{u_{t+1}^*}(u,u_{t+1}).
\end{array}
\end{eqnarray}
Note that from \eqref{p3}, we have
$$ \mu_t-\alpha_t\gamma_t\lambda=\gamma_t-\alpha_t\beta_t\gamma_t\lambda-\alpha_t\gamma_t\lambda\geqslant0.$$
Thus, combining \eqref{a7}, \eqref{a8}, and using the relatively restricted monotonicity, we derive that
\begin{eqnarray}\label{a9}
\begin{array}{lll}
 \sum_{k=0}^t\gamma_k\Delta D_k(u) &\geqslant & \sum_{k=0}^t\left[\gamma_k\alpha_k\langle  F(u_{k+1}), u_{k+1}-u\rangle\right]-\alpha_t\gamma_t\lambda D_\omega^{u_{t+1}^*}(u,u_{t+1})\\
 &\geqslant & \sum_{k=0}^t\left[2\alpha_k\gamma_k\mu D_\omega^{u_{k+1}^*}(u,u_{k+1})\right]-\alpha_t\gamma_t\lambda D_\omega^{u_{t+1}^*}(u,u_{t+1}).
\end{array}
\end{eqnarray}
Now, using the expression of $\Delta D_k(u)$ and \eqref{p2}, we further derive that
\begin{eqnarray}\label{a10}
\begin{array}{lll}
&&\sum_{k=0}^t\left[(\gamma_k+2\alpha_k\gamma_k\mu)D_\omega^{u_{k+1}^*}(u,u_{k+1})\right]-\alpha_t\gamma_t\lambda D_\omega^{u_{t+1}^*}(u,u_{t+1})\\
&\leqslant& \sum_{k=0}^t\gamma_k D_\omega^{u_{k}^*}(u,u_{k})=\gamma_0D_\omega^{u_0^*}(u,u_0)+\sum_{k=1}^t\gamma_k D_\omega^{u_{k}^*}(u,u_{k})\\
&\leqslant& \gamma_0D_\omega^{u_0^*}(u,u_0)+\sum_{k=1}^t(\gamma_{k-1}+2\alpha_{k-1}\gamma_{k-1}\mu) D_\omega^{u_{k}^*}(u,u_{k})\\
&\leqslant& \gamma_0D_\omega^{u_0^*}(u,u_0)+\sum_{k=0}^{t-1}(\gamma_{k}+2\alpha_{k}\gamma_{k}\mu) D_\omega^{u_{k+1}^*}(u,u_{k+1}).
\end{array}
\end{eqnarray}
Thereby,
\begin{equation}\label{a11}
 (\gamma_t+2\alpha_t\gamma_t\mu-\alpha_t\gamma_t\lambda)D_\omega^{u_{t+1}^*}(u,u_{t+1})\leqslant \gamma_0D_\omega^{u_0^*}(u,u_0).
\end{equation}
This together with \eqref{p4} implies \eqref{rate}.

It remains to determine the best convergence rate when the parameters $\alpha_k, \beta_k$ are fixed constants with the form of
$\alpha_k\equiv\frac{\theta}{2\lambda}$ and $\beta_k\equiv \beta$. In such setting, it follows from \eqref{p1} that
$$\gamma_k=\frac{1}{\beta}\gamma_{k-1}=\cdots=\left(\frac{1}{\beta}\right)^k\gamma_0.$$
Now, eliminating $\gamma_k$ from \eqref{p2} and using \eqref{p3}-\eqref{p4}, we get
\begin{subequations}
\begin{align}[left = \empheqlbrace\,]
\label{p5}&\frac{1}{\beta}\leqslant 1+\kappa\theta, \\
\label{p6}&\theta\beta\leqslant 2-\theta,\\
\label{p7}&1+\kappa\theta-\frac{\theta}{2}\geqslant0.
\end{align}
\end{subequations}
From \eqref{p6}, we observe that $\theta\leqslant 2$. Thus, a product of \eqref{p5} and \eqref{p6} shows that the parameter $\theta$ must satisfy the inequality
$$\theta\leqslant (2-\theta)(1+\kappa\theta),$$
from which it must hold that
$$0<\theta\leqslant \theta_0:=\frac{\kappa-1+\sqrt{1+\kappa^2}}{\kappa}.$$
Note that $\theta\leqslant \theta_0<2$ ensures \eqref{p7} to hold. Therefore, $\theta_0$ is the largest possible value of $\theta$ that satisfies \eqref{p5}-\eqref{p7}. Hence, we could use \eqref{p5} and \eqref{p6} to bound $\beta$ as follows.
$$\frac{1}{1+\kappa\theta_0}\leqslant \frac{1}{1+\kappa\theta}\leqslant \beta\leqslant \frac{2}{\theta}-1.$$
Thus, the smallest possible value of $\beta$ is $\frac{1}{1+\kappa\theta_0}$, which corresponds to the best convergence rate by observing that $\frac{\gamma_0}{\gamma_k}=\beta^k$. Finally, after some simple relaxations, we could get
$$\frac{1}{1+2\mu\alpha_t-\lambda\alpha_t}\leqslant2+\frac{1}{\kappa}.$$
Thereby, we obtain \eqref{bestrate}. This completes the proof.
\end{proof}

\begin{remark}
If $\omega$ is differentiable and 1-strongly convex and $F$ is $\lambda$-Lipschitz continuous and $\mu$-relatively restricted monotone with respect to $\omega$, then the authors of \cite{2020simple} obtained the convergence result
\begin{equation*}\label{Lan}
D_\omega(u,u_{t+1}) \leqslant (\frac{1}{1+\kappa})^{t+1}(1+\frac{1}{\kappa})D_\omega^{u^*_{0}}(u,u_0),
\end{equation*}
for the OE method \eqref{OE} by taking $\alpha_t\equiv\frac{1}{2\lambda}, \beta_t\equiv\frac{1}{1+\kappa}$;  the corresponding extrapolation value is $\alpha_t\beta_t=\frac{1}{2\lambda(1+\kappa)}$. Our convergence rate in Theorem \ref{thMEP2} specialized to the OE method \eqref{OE} is better since
$$\sqrt{1+\kappa^2}-\kappa=\frac{1}{\sqrt{1+\kappa^2}+\kappa}<\frac{1}{1+\kappa}.$$
The reason may lie in that we choose larger step sizes $\alpha_t\equiv\frac{\theta_0}{2\lambda}$ and larger extrapolation values since in our setting $$\alpha_t\beta_t=\frac{1}{2\lambda(1+\kappa)}\frac{\theta_0+\kappa\theta_0}{1+\kappa\theta_0}
>\frac{1}{2\lambda(1+\kappa)},$$
by noting $\theta_0> 1$.
\end{remark}

\section{Conclusion}
In this study, we introduced two unified algorithmic frameworks--mirror EG and EP methods for solving relatively Lipschitz and generalized monotone VIs.
The proposed frameworks provide us with equivalent formulations, similar to the original EG method, for the well-known Nemirovski's mirror prox method and Nesterov's dual extrapolation method as well as the Bregman EG method. The equivalent formulations help us clearly see the essential difference between these methods. Theoretically, we are able to analyze (or even improve) the convergence for all these methods in a unified way with currently weakest assumptions. Nevertheless, the convergence theory is far from completion. For example, it is unclear whether one could derive linear convergence for the mirror EG method under the relatively restricted monotonicity, and derive sublinear convergence in the sense of \eqref{wwbound} in Theorem \ref{thMEG2} for the mirror EP method under the weakly MVI monotone assumption. Moreover, the research directions, briefly described below, may be considered as future work as well.

It is well-known that VIs can be reformulated as monotone inclusions. Recently, the author of \cite{2020A} proposed the forward-reflected-backward (FRB) splitting for approximating a solution to monotone inclusions and showed its linear convergence under Lipschitz continuity and strong monotonicity. One of the main contributions made in \cite{2020A} is that the authors could relax the cocoercivity assumption to Lipschitz continuity. Since the FRB splitting shares the same extrapolation technique with the operator extrapolation method in \cite{2020simple}, we wonder whether the Lipschitz continuity and strong monotonicity assumed by FRB for linear convergence can be relaxed into relative versions.

A slightly more general VIs than \eqref{VI} is to find a point $\hat{u}$ such that
\begin{equation}\label{gVI}
  \langle F(\hat{u}), u-\hat{u}\rangle + r(u)-r(\hat{u}) \geqslant 0, \forall u\in\cU,
\end{equation}
where $r$ is a proper lower semicontinuous convex function, usually playing a regularization role. The entropy-regularized constrained saddle-point problem \cite{2014Learning,2021fast} can be reformulated as its special case. We wonder whether our proposed mirror frameworks could be extended to study \eqref{gVI}.

Last but not least, it would be interesting to extend our results to stochastic VIs.

\section*{Acknowledgements}
The authors wish to express their thanks to the anonymous referees and the associate editor for several helpful comments, which allowed us to improve the original presentation. The first author was supported by the National Science Foundation of China (No.11971480), the Natural Science Fund of Hunan for Excellent Youth (No.2020JJ3038), and the Fund for NUDT Young Innovator Awards (No. 20190105).
The second author was supported by the Natural Science Foundation of China (Nos. 11991020, 11631013, 12021001, 11971372 and 11991021) and the Strategic Priority Research Program of Chinese Academy of Sciences (No. XDA27000000).

\section*{Appendix}
\noindent{\bf Proof of Lemma \ref{resclip}.} Fix $u, v, z\in \RR^d$. By Cauchy-Schwartz inequality, Lipschitzness of $F$, and strong convexity of $\omega$, we derive that for any $u^*\in \partial \omega(u), v^*\in \partial \omega(v)$,
\begin{eqnarray*}
\begin{array}{lll}
 & & \langle F(v)-F(u),v-z\rangle \\
 &\leqslant & \|F(v)-F(u)\|_*\|z-v\|\leqslant L\|v-u\|\|z-v\|\\
 &\leqslant & L(\frac{1}{2}\|v-u\|^2 +\frac{1}{2}\|z-v\|^2)
 \leqslant \frac{L}{\mu}(D_\omega^{u^*}(v,u)+ D_\omega^{v^*}(z,v)))
\end{array}
\end{eqnarray*}
from which the result follows.

\bigskip

\noindent{\bf Proof of Lemma \ref{relsmooth}.} Fix $u, v, z\in \RR^d$. By the definition Bregman distance and relative smoothness of $\phi$, we derive that for any $u^*\in \partial \omega(u), v^*\in \partial \omega(v)$,
\begin{eqnarray*}
\begin{array}{lll}
 & & L(D_\omega^{u^*}(v,u)+ D_\omega^{v^*}(z,v))) \\
 &\geqslant & \phi(v)-[\phi(u)+\langle \nabla \phi(u),v-u\rangle]+ \phi(z)-[\phi(v)+\langle \nabla \phi (v), z-v\rangle]\\
 &= & \phi(z)-\phi(u)-\langle \nabla \phi(u),v-u\rangle - \langle \nabla \phi (v), z-v\rangle\\
 &= & D_\phi(z,u)+\langle \nabla \phi(u),z-u\rangle -\langle \nabla \phi(u),v-u\rangle - \langle \nabla \phi (v),z-v\rangle\\
 &=&  D_\phi(z,u)+\langle \nabla\phi(v)-\nabla\phi(u),v-z\rangle.
\end{array}
\end{eqnarray*}
Note that $F=\nabla\phi$ and  $D_\phi(z,u)\geqslant 0$ due to the convexity of $\phi$. The conclusion follows.

\small
\bibliographystyle{plain}


\newpage{\pagestyle{empty}\cleardoublepage}

\end{document}